\documentclass[12pt]{amsart}


\pagestyle{plain}
\setlength{\parskip}{0in}
\setlength{\textwidth}{6.8in}
\setlength{\topmargin}{-.5in}
\setlength{\textheight}{9.3in}
\setlength{\parindent}{.25in}
\setlength{\oddsidemargin}{-.7cm}
\setlength{\evensidemargin}{-.7cm}

\usepackage{amsthm}
\usepackage{amssymb}

\usepackage[latin1]{inputenc}
\usepackage{subfigure}
\usepackage{color}
\usepackage{amsmath}
\usepackage{amsthm}
\usepackage{amstext}
\usepackage{amssymb}
\usepackage{amsfonts}
\usepackage{graphicx}
\usepackage{young}
\usepackage{multicol}
\usepackage{mathrsfs}
\usepackage{stmaryrd}
\usepackage{bbm}
\usepackage[all]{xy}
\usepackage{mathabx}
\usepackage{tikz}
\usepackage{mathtools}
\usepackage{tabularx}
\usepackage{array}
\usepackage{commath}

\usepackage{caption}

\theoremstyle{plain}
\theoremstyle{definition}
\newtheorem{theorem}{Theorem}[section]

\newtheorem{remark}[theorem]{Remark}
\newtheorem{lemma}[theorem]{Lemma}

\newtheorem{problem}[theorem]{Problem}

\newtheorem{example}[theorem]{Example}
\newtheorem{proposition}[theorem]{Proposition}
\newtheorem{corollary}[theorem]{Corollary}

\DeclareMathAlphabet{\mathpzc}{OT1}{pzc}{m}{it}

\usepackage{amssymb,amsmath,tabularx,graphicx}
\usepackage{tikz}
\usetikzlibrary[calc,intersections,through,backgrounds,arrows,decorations.pathmorphing]
%

\title{Semistable subcategories for tiling algebras}
\author{Monica Garcia}
\author{Alexander Garver}

\begin{document}

\maketitle

\begin{abstract}{Semistable subcategories were introduced in the context of Mumford's GIT and interpreted by King in terms of representation theory of finite dimensional algebras. Ingalls  and Thomas later showed that for finite dimensional algebras of Dynkin and affine type, the poset of semistable subcategories is isomorphic to the corresponding poset of noncrossing partitions. We show that semistable subcategories defined by tiling algebras, introduced by Coelho Sim{\~o}es and Parsons, are in bijection with noncrossing tree partitions, introduced by the second author and McConville. Moreover, this bijection defines an isomorphism of the posets on these objects. Our work recovers that of Ingalls and Thomas in Dynkin type $A$.}\end{abstract}

\section{Introduction}
Mumford's geometric invariant theory (GIT) provides a technique for taking the quotient of an algebraic variety by certain types of group actions in such a way that the resulting quotient is again an algebraic variety. Given a variety $V$ and a reductive algebraic group $G$ acting linearly on $V$, one replaces $V$ by its ``semistable points'' and then forms the GIT quotient $V\!/\!\!/\! G$, which is an algebraic variety. 

In \cite{king1994moduli}, King interpreted this notion of semistable points in terms of representation theory of algebras as follows. Let $\Lambda = \Bbbk Q/I$ be the path algebra of a \textbf{quiver} $Q$ (i.e., a 4-tuple $(Q_0,Q_1,\textsf{s},\textsf{t})$ where $Q_0  = \{1,2,\ldots, n\}$ is a set of \textbf{vertices}, $Q_1$ is a set of \textbf{arrows}, and two functions $\textsf{s},\textsf{t}: Q_1 \to Q_0$ defined so that for every $\alpha \in Q_1$, we have $\textsf{s}(\alpha) \stackrel{\alpha}{\to} \textsf{t}(\alpha)$) modulo an admissible ideal $I$ and $\Bbbk$ is an algebraically closed field. Recall that the \textbf{path algebra} $\Bbbk Q$ consists of formal $\Bbbk$-linear combinations of paths in $Q$, and its multiplication is induced  by concatenation of paths. For such algebras, any $\Lambda$-module $M$ may be regarded as a \textbf{representation} of $Q$ (i.e., an assignment of a finite dimensional $\Bbbk$-vector space $M_i$ to each vertex of $Q$ and a $\Bbbk$-linear map to each arrow of $Q$). A representation $M$ of $Q$ naturally defines a \textbf{dimension vector}, denoted by $\textbf{dim}(M) := (\dim_\Bbbk M_i)_{i=1}^n \in \mathbb{Z}^n_{\ge 0},$ where $n$ will henceforth denote the number of vertices of $Q$. 

Now let $V=\text{mod}(\Lambda,\textbf{d})$, the variety of finitely generated $\Lambda$-modules with dimension vector $\textbf{d} = (d_1, \ldots, d_n),$  and let $G = \prod_{i=1}^n \text{GL}_{d_i}(\Bbbk)$ act by base change at each vertex of $Q$. In \cite{king1994moduli}, King showed that the semistable points of $V$, which from now on we call \textbf{$\theta$-semistable representations}  (resp., \textbf{$\theta$-stable representations}) where $\theta \in \text{Hom}(\mathbb{Z}^n,\mathbb{Z})$ is a linear map, are the representations $M$ satisfying 

$\bullet$ $\theta(\textbf{dim}(M)) = 0$, and

$\bullet$ for any subrepresentation $N \subset M$, one has $\theta(\textbf{dim}(N)) \le 0$ (resp., $\theta(\textbf{dim}(N)) < 0$).

\noindent We refer to such linear maps $\theta \in \text{Hom}(\mathbb{Z}^n,\mathbb{Z})$ as \textbf{stability conditions} on $\text{mod}(\Lambda)$, the category of finitely generated $\Lambda$-modules. Any choice of stability condition $\theta$ defines a subcategory $\theta^{ss}$ of $\text{mod}(\Lambda)$ consisting of the $\theta$-semistable representations. We refer to $\theta^{ss}$ as a \textbf{semistable subcategory}. Note that two different stability conditions may define the same semistable subcategory.

We study the poset of all semistable subcategories of $\text{mod}(\Lambda)$ ordered by inclusion, denoted $\Lambda^{ss}$. There are close connections between the theory of semistable subcategories and the combinatorics of Coxeter groups. If $\Lambda = \Bbbk Q$ where $Q$ is an acyclic orientation of a simply-laced Dynkin or extended Dynkin diagram, it follows from \cite[Theorem 1.1]{ingalls2009noncrossing} that $\Lambda^{ss}$ is isomorphic to the poset of noncrossing partitions associated with $Q$.

Other important examples of algebras $\Lambda$ include cluster-tilted algebras \cite{bakke2007cluster}, which appear in the context of cluster algebras, and also preprojective algebras. In the latter case, in \cite{thomas2017stability} it is shown that $\Lambda^{ss}$ is isomorphic to the shard intersection order of the Coxeter arrangement associated with $Q$ (see \cite{reading2011noncrossing} for more on the shard intersection order).

The purpose of this work is to combinatorially classify the semistable subcategories for the class of \textbf{tiling algebras}, introduced in \cite{simoes2017endomorphism} to study endomorphism algebras of maximal rigid objects in some negative Calabi-Yau categories. Following \cite{garver_oriented_rep_thy}, these algebras, denoted $\Lambda_T$, are defined by the data of a tree $T$ embedded in the disk $D^2$ whose interior vertices have degree at least 3 (see Figure~\ref{tree_quiver}). Examples of tiling algebras are given by the cluster-tilted algebras of cluster type $A$; the trees defining these algebras are those whose interior vertices are of degree 3.

\begin{figure}
$$\begin{array}{ccccccccc}\includegraphics[scale=1.3]{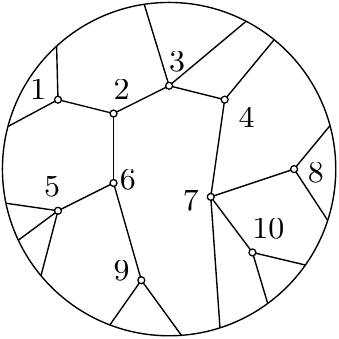} & & & \includegraphics[scale=1.3]{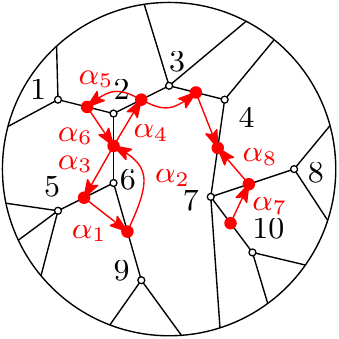} \\ (a) & & & (b)
\end{array}$$
\caption{We show a tree $T$ in (a) and the quiver $Q_T$ it defines in (b). The associated tiling algebra is $\Lambda_T = \Bbbk Q_T/I_I$ where $I_T = \langle \alpha_2\alpha_1, \alpha_3\alpha_2, \alpha_1\alpha_3, \alpha_5\alpha_4, \alpha_6\alpha_5, \alpha_4\alpha_6, \alpha_8\alpha_7 \rangle$.}
\label{tree_quiver}
\end{figure}

The tree $T$ defines a simplicial complex of noncrossing arcs on $T$ called the noncrossing complex, denoted by $\Delta^{NC}(T)$ (see Section~\ref{Sec_nc_complex}). Each facet of $\Delta^{NC}(T)$ consists of \textbf{red arcs}, \textbf{green arcs}, and \textbf{boundary arcs}. In \cite{manneville2017geometric}, it is shown that if $\delta$ is a green or red arc in a facet of $\Delta^{NC}(T)$, it gives rise to a \textbf{g-vector}, denoted $\textbf{g}(\delta) \in \mathbb{Z}^n$. Additionally, in \cite{garver_oriented_rep_thy}, it is shown that the facets of $\Delta^{NC}(T)$ are in bijection with \textbf{wide subcategories} of $\text{mod}(\Lambda_T).$ With these facts in mind, we arrive at our main theorem.

\begin{theorem}\label{thm:thm1}
Let $\mathcal{W} \subset \text{mod}(\Lambda_T)$ be a wide subcategory, let $\mathcal{F}$ be the corresponding facet of ${\Delta}^{NC}(T)$, and let $\mathcal{F}^{gr}$ be the set of green arcs of $\mathcal{F}$. Then the \textbf{Kreweras stability condition} defined as
$$\begin{array}{rcl}
\theta_{\mathcal{F}} : \mathbb{Z}^{n} & \longrightarrow &\mathbb{Z} \\
\textbf{dim}(M) & \longmapsto & \sum_{\delta \in \mathcal{F}^{gr}} \langle \mathbf{g}(\delta), \textbf{dim}(M) \rangle, 
\end{array}$$
where $M \in \text{mod}(\Lambda_T)$ and $\langle -,-\rangle$ is the standard Euclidean inner product, satisfies $\theta^{ss}_{\mathcal{F}} = \mathcal{W}$. Conversely, any semistable subcategory of $\text{mod}(\Lambda_T)$ is a wide subcategory of $\text{mod}(\Lambda_T)$.
\end{theorem}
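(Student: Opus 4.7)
The proof has two pieces: the identity $\theta_{\mathcal{F}}^{ss} = \mathcal{W}$, and the converse assertion that every semistable subcategory is wide. I would tackle them in that order, using the simple objects of $\mathcal{W}$ as the bridge between the combinatorics of $\mathcal{F}$ and the stability condition.

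First, I would use the bijection between facets of $\Delta^{NC}(T)$ and wide subcategories of $\text{mod}(\Lambda_T)$ from \cite{garver_oriented_rep_thy} to identify the simple objects of $\mathcal{W}$; these are indexed by the red arcs of $\mathcal{F}$. The key combinatorial lemma to prove is that for each red arc $\delta' \in \mathcal{F}$ with associated simple $S_{\delta'}$ of $\mathcal{W}$, one has $\sum_{\delta \in \mathcal{F}^{gr}} \langle \mathbf{g}(\delta), \mathbf{dim}(S_{\delta'})\rangle = 0$. I expect this to follow from an interpretation of the g-vectors of the arcs in $\mathcal{F}$ as a basis dual, with respect to $\langle -,-\rangle$, to the dimension vectors of the simples of $\mathcal{W}$, in the spirit of $\tau$-tilting duality as developed in \cite{manneville2017geometric}. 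Combined with additivity of $\theta_{\mathcal{F}}$ on short exact sequences and the fact that $\mathcal{W}$ is extension closed with simples $\{S_{\delta'}\}$, this yields $\theta_{\mathcal{F}}(\mathbf{dim}(M)) = 0$ for every $M \in \mathcal{W}$.

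Next I would verify the semistability inequality $\theta_{\mathcal{F}}(\mathbf{dim}(N)) \le 0$ for every submodule $N \subseteq M$ with $M \in \mathcal{W}$. This is the main technical obstacle, since $N$ need not itself lie in $\mathcal{W}$. Here I would use the explicit string-module description of representations of $\Lambda_T$: a submodule of a string module corresponds to a union of substrings, and $\mathbf{g}(\delta)$ records the projective-resolution data of the arc-indexed module associated with $\delta$. Summing over $\delta \in \mathcal{F}^{gr}$, the pairing $\langle \mathbf{g}(\delta), \mathbf{dim}(N)\rangle$ should be controlled by how substrings of $N$ interact with the tiles cut out by $\mathcal{F}$, and I expect a case analysis on arcs that cross vs.\ are compatible with $\mathcal{F}$ to give the nonpositivity. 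For the reverse inclusion $\theta_{\mathcal{F}}^{ss} \subseteq \mathcal{W}$, the cleanest route is to characterize the $\theta_{\mathcal{F}}$-stable objects as exactly the simples $S_{\delta'}$, so that every $\theta_{\mathcal{F}}$-semistable module admits a filtration by these and therefore lies in the extension closure $\mathcal{W}$.

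Finally, for the converse statement, I would invoke the general fact going back to \cite{king1994moduli} that for any stability condition $\theta$, the subcategory $\theta^{ss}$ is abelian and closed under extensions in $\text{mod}(\Lambda_T)$, and hence is automatically a wide subcategory; no tiling-specific argument is required at this stage. The hardest step I anticipate is the submodule inequality described above: controlling $\langle \mathbf{g}(\delta), \mathbf{dim}(N) \rangle$ for arbitrary submodules $N$ of a string module will require a careful accounting of how substrings sit inside the tree $T$ relative to the green arcs of $\mathcal{F}$, and this combinatorial bookkeeping is where the bulk of the work will lie.
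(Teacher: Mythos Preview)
Your plan for the inclusion $\mathcal{W}\subseteq\theta_{\mathcal{F}}^{ss}$ is essentially the paper's: vanishing on the simples of $\mathcal{W}$ comes from the $\mathbf{g}$/$\mathbf{c}$-duality (Proposition~\ref{dualbases_prop}), and the strict inequality on proper indecomposable submodules is the combinatorial count of zigzag edges against substrings carried out in Lemma~\ref{lem1} together with the submodule/subsegment dictionary of Lemma~\ref{lem2}. Your anticipation that this is the main bookkeeping is correct, and the paper organizes it exactly as you describe.

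The gap is in the reverse inclusion $\theta_{\mathcal{F}}^{ss}\subseteq\mathcal{W}$. Your proposed route---show that the $\theta_{\mathcal{F}}$-stable objects are exactly the $S_{\delta'}$ and then use Jordan--H\"older---is logically sound, but you have given no mechanism for the hard half of that characterization, namely ruling out stable objects outside $\{S_{\delta'}\}$. Since $\theta_{\mathcal{F}}$ is a single linear functional, its kernel is a hyperplane, so whenever $\mathcal{W}$ has fewer than $n-1$ simples there will be indecomposables $M(w(s))\notin\mathcal{W}$ with $\theta_{\mathcal{F}}(M(w(s)))=0$; for each such module you must produce a proper submodule with $\theta_{\mathcal{F}}\ge 0$. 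This is precisely where the paper invests its nontrivial machinery: it builds a torsion pair $(\mathscr{T}_{\mathbf{B}},\mathscr{F}_{\mathbf{B}})$ from the red--green tree $\mathcal{T}_{\mathcal{F}}$ (Section~\ref{Section_red-green_torsion_pair}, in particular Lemma~\ref{lemma_ii}), and for any $M(w(s))\notin\mathcal{W}$ uses the nonzero torsion part $T\hookrightarrow M(w(s))$ to extract an indecomposable submodule $M(w(t_j))$ with $\theta_{\mathcal{F}}(M(w(t_j)))>0$. Nothing in your outline supplies this destabilizing submodule, and the string/tile case analysis you sketch for the forward direction does not automatically yield it. Your treatment of the converse via \cite{king1994moduli} matches the paper.
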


For simplicity of notation, in the sequel we write $\theta_\mathcal{F}(M)$ rather than the more cumbersome $\theta_{\mathcal{F}}(\textbf{dim}(M)).$

The paper is organized as follows. In Section~\ref{Sec_nc_complex}, we review the noncrossing complex of arcs on a tree. In Section~\ref{Section_cg_vec}, we associate $\textbf{g}$- and $\textbf{c}$-vectors to each facet of this complex, which are essential to our construction of semistable subcategories. In Section~\ref{sec_til_alg}, we define the tiling algebras that we will study. In Section~\ref{Sec_noncrossing_tree_part}, we define noncrossing tree partitions, which will classify the semistable subcategories of $\text{mod}(\Lambda_T)$. In Section~\ref{Section_red-green_torsion_pair}, we describe the data of a noncrossing tree partition and its Kreweras complement as a torsion pair in $\text{mod}(\Lambda_T)$. We use this description to prove Theorem~\ref{thm:thm1} in Section~\ref{Sec_proof_of_thm}. Lastly, in Section~\ref{Sec_additiona_stuff}, we propose a natural extension of our work to general gentle algebras.

\section{Noncrossing complex}\label{Sec_nc_complex}

A \textbf{tree} $T=(V_T,E_T)$ is a finite connected acyclic graph. Any tree may be embedded in the disk $D^2$ in such a way that a vertex is on the boundary if and only if it is a leaf.  We will assume that any tree is accompanied by such an embedding in $D^2$. We say two trees $T$ and $T^\prime$ are \textbf{equivalent} if there is an ambient isotopy between the spaces $D^2\backslash T$ and $D^2\backslash T^\prime$. We consider trees up to equivalence. Additionally, we assume that the \textbf{interior vertices} of any tree $T$ (i.e., the nonleaf vertices of $T$) have degree at least 3. 

We say the closure of a connected component of $D^2\backslash T$ is a \textbf{face} of $T$. A \textbf{corner} $(v,F)$ of $T$ is a pair consisting of an interior vertex $v$ of $T$ and a face $F$ of $T$ that contains $v$.

An \textbf{acyclic path} supported by a tree $T$ is a sequence $(v_0, v_1,...,v_t)$ of pairwise distinct vertices of $T$ such that $v_i$ and $v_j$ are adjacent if and only if $j = i\pm 1$.  By convention, the sequence $(v_0,v_1,\ldots, v_t)$ and the sequence $(v_t, v_{t-1},\ldots,v_0)$ define the same acyclic path. We will refer to $v_0$ and $v_t$ as the \textbf{endpoints} of the acyclic path $(v_0,v_1,\ldots, v_t)$.  Since $T$ is acyclic, any acyclic path is determined by its endpoints, and we can therefore write $(v_0,v_1,...,v_t)=[v_0,v_t]$. In addition, we will say that an acyclic path $[v_0,v_t]$ \textbf{contains} an acyclic path $[u_0,u_s]$ if every vertex of $[u_0,u_s]$ is also a vertex of $[v_0,v_t]$.

Given two acyclic paths $[v_0,v_t]$ and $[v_t,v_{t+s}]$ whose only common vertex is $v_t$ and where $[v_0, v_{t+s}]$ is an acyclic path, we define the \textbf{composition} of $[v_0,v_t]$ and $[v_t,v_{t+s}]$ to be $[v_0,v_t]\circ [v_t,v_{t+s}] := [v_0,v_{t+s}].$

An \textbf{arc} $\delta=(v_0, v_1,...,v_t)$ is an acyclic path such that its endpoints are leaves and any two edges $(v_{i-1},v_i)$ and $(v_i,v_{i+1})$ are incident to a common face. We say $\delta$ \textbf{contains a corner} $(v,F)$ if $v = v_i$ for some $i \in \{1,\ldots, t-1\}$ and $(v_{i-1},v_i)$ and $(v_i,v_{i+1})$ are incident to $F$. We also note that $\delta$ divides $D^2$ into two \textbf{regions} composed of disjoint subsets of the faces of $T$. We let $\text{Reg}(\delta,F)$ denote the region defined by $\delta$ which contains face $F$. We say that two arcs $\delta$ and $\delta^\prime$ are \textbf{crossing} if given any regions $\text{R}_\delta$ and $\text{R}_{\delta^\prime}$ defined by $\delta$ and $\delta^\prime$, respectively, then $\text{R}_\delta \not \subset \text{R}_{\delta^\prime}$ or $\text{R}_{\delta^\prime} \not \subset \text{R}_{\delta}.$ Otherwise, we say $\delta$ and $\delta^\prime$ are \textbf{noncrossing}.

Define the \textbf{noncrossing complex} of $T$, denoted $\Delta^{NC}(T)$, to be the abstract simplicial complex of noncrossing arcs of $T$. By \cite[Corollary 3.6]{garver2016oriented}, this is a \textbf{pure complex} (i.e., any two facets have the same cardinality). We will primarily work with the facets of $\Delta^{NC}(T)$. 

Let $\mathcal{F}$ be any facet of $\Delta^{NC}(T)$. The arcs of $\mathcal{F}$ containing a corner $(v,F)$ are linearly ordered: two arcs $\delta, \gamma \in \mathcal{F}$ containing $(v,F)$ satisfy $\delta \le_{(v,F)} \gamma$ if and only if $\text{Reg}(\delta,F) \subset \text{Reg}(\gamma, F).$ That such arcs are linearly ordered follows from the fact that they are pairwise noncrossing. We say that an arc $\delta$ of $\mathcal{F}$ is \textbf{marked} at corner $(v,F)$ if $\delta$ contains $(v,F)$ and is the maximal such arc with respect to $\le_{(v,F)}$. We denote the unique arc of $\mathcal{F}$ that is marked at corner $(v,F)$ by $p(v,F)$. We use $\delta \lessdot_{(v,F)} \gamma$ to indicate that $\delta <_{(v,F)} \gamma$ and there does not exist $\gamma^\prime \in \mathcal{F}\backslash\{\gamma\}$ such that $\delta <_{(v,F)} \gamma^\prime <_{(v,F)} \gamma$ in $\mathcal{F}$. We show an example of the facets of a noncrossing complex in Figure~\ref{nc_complex}.

In \cite[Proposition 3.5]{garver2016oriented}, it is shown that every $\delta \in \mathcal{F}$ is marked at either one or two corners. In the latter case, the two corners at which $\delta$ is marked belong to different regions defined by $\delta$. We refer to the arcs marked at a single corner as \textbf{boundary arcs}, and we denote the set of boundary arcs of $\mathcal{F}$ by $\mathcal{F}^\partial$. Boundary arcs may also be characterized as the arcs $\delta$ of $T$ with the property that there exists a face $F_\delta$ of $T$ such that every corner contained in $\delta$ is of the form $(v,F_\delta)$ for some interior vertex $v$ of $T$. From this it follows that in any facet $\mathcal{F}$ any boundary arc $\delta$ is minimal with respect to the order $\le_{(v,F_\delta)}$ where $v$ is an interior vertex of $\delta$. In particular, if a boundary arc $\delta \in \mathcal{F}$ is marked at a corner, then $\delta$ is the only arc of $\mathcal{F}$ containing that corner.

\begin{figure}
$$\includegraphics[scale=1.1]{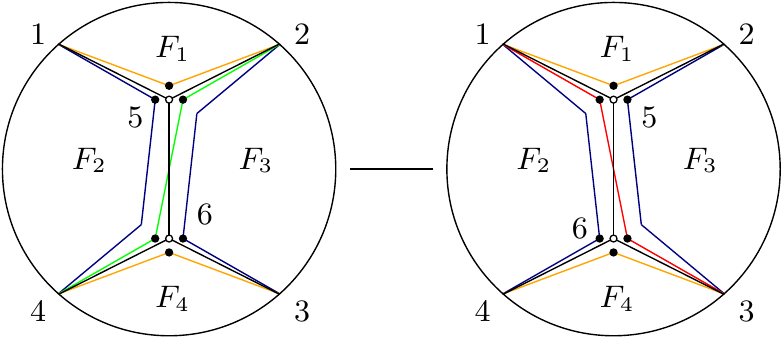}$$
\caption{Both facets of this noncrossing complex contain 5 arcs. Our convention in this paper is to represent an arc $\delta$ from a facet $\mathcal{F}$ as a curve in $D^2$ between the endpoints of $\delta$ that stays close to the vertices in $\delta$, but does not cross any other arcs in $\mathcal{F}$. The boundary arcs are shown in gold and in blue. The marked corners of arcs are indicated by black dots. The faces are $F_1, F_2, F_3, F_4.$ These two facets are joined by an edge to indicate that one facet may be obtained from the other by replacing a red or green arc with an arc of the opposite color.}
\label{nc_complex}
\end{figure}

The arcs of $\mathcal{F}$ that are not boundary arcs come with the extra data of a color as follows. A \textbf{flag} is a triple $(v,e,F)$ consisting of a vertex $v$, an edge $e$, and face $F$ where $v$ is incident to $e$ and $e$ is incident to $F$. We say a flag is \textbf{green} if face $F$ appears immediately counterclockwise from $e$, when rotating about $v$. Otherwise, we say $(v,e,F)$ is \textbf{red}. Let $(v,F)$ and $(u,G)$ be the two corners at which an arc $\delta \in \mathcal{F}\backslash \mathcal{F}^\partial$ is marked, and let $e$ and $e'$ be edges of $T$ contained in $[v,u]$ where the former is incident to $v$ and the latter is incident to $u$. Both $(v,e,F)$ and $(u,e',G)$ have to be of the same color, as $F$ and $G$ belong to different regions determined by $\delta$. We say $\delta$ is a \textbf{green arc} if $(v,e,F)$ and $(u,e',G)$ are green flags, otherwise we say it is a \textbf{red arc}. Define $\mathcal{F}^{gr}$ (resp., $\mathcal{F}^{red}$) to be the set of green (resp., red) arcs of $\mathcal{F}$. Observe that $\mathcal{F}=\mathcal{F}^{red} \sqcup \mathcal{F}^{gr} \sqcup \mathcal{F}^\partial$. We show examples of red and green arcs in Figures~\ref{nc_complex} and \ref{nc_to_ncp}.

Additionally, we can associate to a colored arc $\delta \in \mathcal{F}$ a unique pair of arcs $\{ \mu, \nu \}$ in $\mathcal{F}$. Let $\mu, \nu \in\mathcal{F}$ be the arcs satisfying $\mu \lessdot_{(v,F)} \delta$ and $\nu \lessdot_{(u,G)} \delta$, where $(v,F)$ and $(u,G)$ are the corners at which $\delta$ is marked. If $\delta$ is green (resp., red) we let $F'$ and $G'$ be the unique faces immediately clockwise (resp., counterclockwise) from $F$ about $v$ and $G$ about $u$. By \cite[Proposition 3.7, Claim 1]{garver2016oriented}, arcs $\mu$ and $\nu$ are marked at $(u,G')$ and $(v,F')$ respectively, and $[v,u]$ is the unique longest acyclic path along which they agree. We say that $\mu$ and $\nu$ are the \textbf{supporting arcs} associated to $\delta$ in $\mathcal{F}$. In Figure~\ref{nc_complex}, the supporting arcs of the unique non-boundary arc of each facet are presented in blue. 

We have the following important lemma, which shows that given a non-boundary arc of a facet and one of its supporting arcs, the two have a common leaf of $T$.

\begin{lemma}\label{supp_arcs_lemma}
Let $\mathcal{F} \in \Delta^{NC}(T)$ be a facet, and let $\delta = [u_1,u]\circ[u,v]\circ[v,v_1] \in \mathcal{F}\backslash\mathcal{F}^\partial$ be an arc whose marked corners are $(v,F)$ and $(u,G)$. Let $\{\mu,\nu\}$ be the supporting arcs of $\delta$ where $\mu \lessdot_{(v,F)} \delta$ and $\nu \lessdot_{(u,G)} \delta$. Then $\nu = [u_1,u]\circ [u,v] \circ [v, v^\prime_1]$ and $\mu = [u^\prime_1,u]\circ [u,v] \circ [v,v_1]$ for some acyclic paths $[v,v^\prime_1]$ and $[u^\prime_1,u]$ where $v^\prime_1 \neq v_1$ and $u^\prime_1 \neq u_1$.
\end{lemma}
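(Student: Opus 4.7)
The plan is to establish the structural claim for $\nu$ in full; the claim for $\mu$ then follows by an entirely symmetric argument obtained by interchanging the roles of the two marked corners of $\delta$.

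First, I would unpack the local data of $\nu$ at $u$ and $v$. Since $\nu \lessdot_{(u,G)} \delta$, the arc $\nu$ contains the corner $(u, G)$, which forces its two edges at $u$ to be exactly the two edges of $T$ incident to $G$ at $u$---namely the last edge of $[u_1, u]$ and the first edge of $[u, v]$. By \cite[Proposition 3.7]{garver2016oriented}, recalled just before the lemma, $\nu$ is also marked at $(v, F')$, where $F'$ is the face immediately clockwise (resp.\ counterclockwise) from $F$ about $v$ when $\delta$ is green (resp.\ red). A short check of the colour convention shows that in either case $F$ and $F'$ share the last edge of $[u, v]$ at $v$, so the two edges of $\nu$ at $v$ are precisely this edge of $[u, v]$ together with a second edge $e''$ which is distinct from the first edge of $[v, v_1]$.

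Because $T$ is a tree, the unique path between $u$ and $v$ in $T$ is $[u, v]$; since $\nu$ is an acyclic path containing both $u$ and $v$ and exiting each of them along the appropriate endpoint of $[u, v]$, the segment of $\nu$ between $u$ and $v$ must coincide with $[u, v]$. Consequently $\nu$ has the form $[x, u] \circ [u, v] \circ [v, v'_1]$, where $x$ is the leaf of $T$ reached from $u$ along the last edge of $[u_1, u]$ and $v'_1$ is the leaf reached from $v$ along $e''$. Because $e''$ is distinct from the first edge of $[v, v_1]$, the leaf $v'_1$ lies in a branch of $T \setminus \{v\}$ different from the one containing $v_1$, giving $v'_1 \neq v_1$ immediately.

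The principal obstacle is to verify that $x = u_1$, i.e.\ that the tail of $\nu$ beyond $u$ coincides with the sub-arc $[u, u_1]$ of $\delta$; both arcs share their first edge at $u$ (and hence the neighbour $u_2$ of $u$ on this side), but in principle an arc could diverge at a later interior vertex by hugging a different face. I would argue by contradiction: let $w$ be the first interior vertex of $[u, u_1]$ at which $\nu$ and $\delta$ select different outgoing edges, so they agree on $[u, w]$ but turn into opposite faces of $T$ at $w$. Splicing the portion of $\nu$ from its $v'_1$-side leaf through $[u, v]$ and along $[u, w]$ onto $\delta$'s tail $[w, u_1]$ yields an acyclic path that still satisfies the face-adjacency requirement of an arc. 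A region inspection (using that the new arc is built from pieces of $\nu$ and $\delta$, both in $\mathcal{F}$, and that it shares the corner $(u, G)$ with $\delta$) shows that this spliced arc is noncrossing with every arc of $\mathcal{F}$ and lies strictly between $\nu$ and $\delta$ in the order $\leq_{(u,G)}$, contradicting $\nu \lessdot_{(u, G)} \delta$. Hence $x = u_1$, which establishes the claimed form of $\nu$; the identical argument applied at the corner $(v, F)$ with $\mu$ in place of $\nu$ produces $\mu = [u'_1, u] \circ [u, v] \circ [v, v_1]$ with $u'_1 \neq u_1$, completing the proof.
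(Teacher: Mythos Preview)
Your opening reduction and the derivation that $\nu$ has the form $[x,u]\circ[u,v]\circ[v,v_1']$ with $v_1'\neq v_1$ are correct and in fact more explicit than the paper's version; the paper simply says ``it is clear that $\nu$ and $\delta$ agree along $[u,v]$ and separate at $v$.''

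The contradiction step is where your argument diverges from the paper's, and this is where there is a genuine gap. You assert that ``a region inspection shows that the spliced arc is noncrossing with every arc of $\mathcal{F}$,'' but this is not a routine check. Write $\alpha = [u_1,u]\circ[u,v]\circ[v,v_1']$ for your splice. It is easy to verify $\text{Reg}(\nu,G)\subsetneq\text{Reg}(\alpha,G)\subsetneq\text{Reg}(\delta,G)$, but to conclude $\alpha$ is compatible with all of $\mathcal{F}$ you must rule out the existence of some $\beta\in\mathcal{F}$ whose $G$-region is also strictly between $\text{Reg}(\nu,G)$ and $\text{Reg}(\delta,G)$ yet incomparable with $\text{Reg}(\alpha,G)$. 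Since $\nu$ and $\delta$ coincide along $[w,v]$, the set $\text{Reg}(\delta,G)\setminus\text{Reg}(\nu,G)$ has two connected components (one at the $w$-end, one at the $v$-end), so there is exactly one other candidate partition, realised by the \emph{opposite} splice $\alpha' = [x,w]\circ[w,v]\circ[v,v_1]$, and $\alpha'$ does cross $\alpha$. One then has to observe that $\alpha'$ also contains $(u,G)$ and would itself violate $\nu\lessdot_{(u,G)}\delta$, so $\alpha'\notin\mathcal{F}$, and only then is $\alpha$ forced into $\mathcal{F}$. None of this is in your write-up, and ``built from pieces of $\nu$ and $\delta$'' does not by itself prevent crossings with third arcs.

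The paper avoids this entire detour. Rather than manufacture a new arc and argue it lies in the facet, it takes the arc $a := p(w,H)$ that is \emph{already} in $\mathcal{F}$, where $(w,H)$ is the corner of $\delta$ at the divergence vertex. Because $a$ is noncrossing with both $\delta$ and $\nu$ (which sandwich the path $[w,v]$), $a$ must contain $[w,v]$ and hence one of the corners $(v,F)$ or $(v,F')$; maximality at $(w,H)$ then forces $a=\delta$ or $a=\nu$ respectively, an immediate contradiction. This is shorter and sidesteps the need to control all of $\mathcal{F}$ against a hypothetical arc.
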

\begin{proof}

Without loss of generality, we assume that $\delta$ is a green arc. We prove that arc $\nu$ has the desired expression, and the proof that $\mu$ has the desired expression is similar so we omit it.

First, it is clear that $\nu$ and $\delta$ agree along $[u,v]$ and separate at $v$. This means that there exist leaves $u^*_1$ and $v^\prime_1$ of $T$ such that $\nu = [u^*_1,u]\circ [u, v] \circ [v, v_1^\prime]$ and $v^\prime_1 \neq v_1$.

Next, we show that $u^*_1 = u_1$. Suppose that $\delta$ and $\nu$ separate at a vertex $x$ in the acyclic path $[u_1,u]$. Since $\delta$ and $\nu$ both contain $(u,G)$, we know that $x \neq u$. As $\delta$ and $\nu$ are noncrossing and $\delta = p(u,G)$, they must separate as shown in Figure~\ref{delta_gamma_separate}. Let $H$ be the face of $T$ such that corner $(x,H)$ is contained in $\delta$.

Now define $a$ to be the arc of $\mathcal{F}$ that is marked at $(x,H)$. Since $a, \delta$, and $\nu$ are pairwise noncrossing, we have that $[x,v]$ is contained in $a$. It follows that $a$ contains corner $(v,F)$ or $(v,F')$. Suppose the arc $a$ contains corner $(v,F)$, then $a = p(v,F)$ since $\text{Reg}(a,H) = \text{Reg}(a,F)$ and $a = p(x,H)$. However, this implies that $a = p(v,F) = \delta$, a contradiction.  Similarly, if $a$ contains $(v,F^\prime)$, one obtains that $a = p(v,F') = \nu$, a contradiction.

We conclude that there is no vertex $x$ in $[u_1,u]$ at which $\delta$ and $\nu$ separate.
\end{proof}

\begin{figure}
\includegraphics[scale=2]{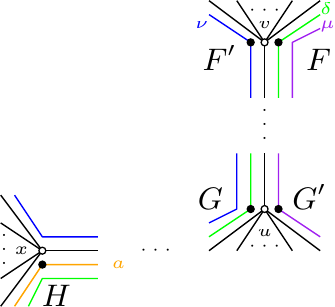}
\caption{The configuration of arcs from the proof of Lemma~\ref{supp_arcs_lemma}.}
\label{delta_gamma_separate}
\end{figure}

\section{Facets and their \textbf{c}- and \textbf{g}-vectors}\label{Section_cg_vec}

In this section, we show how to associate a family of vectors in $\mathbb{Z}^n$ to each facet of the noncrossing complex where $n$ denotes the number of edges of $T$ connecting two interior vertices of $T$. We let $\text{Int}(E_T)$ denote the set of such edges of $T$ and $\{\textbf{x}_e\}_{e \in \text{Int}(E_T)}$ the canonical basis of $\mathbb{Z}^{|\text{Int}(E_T)|} \cong \mathbb{Z}^n$. The definitions we present in this section are reformulations of the definitions presented in \cite{manneville2017geometric}.

Now, fix a facet $\mathcal{F} \in \Delta^{NC}(T)$ and a red or green arc $\gamma = (v_0, v_1 \ldots, v_t) \in \mathcal{F}$. By choosing an orientation of $\gamma$, we define $\textbf{g}(\gamma):= \sum_{e \in \text{Int}(E_T)}g^e_{\gamma}\textbf{x}_e \in \mathbb{Z}^n$ where for each $e = (v_i,v_{i+1}) \in \text{Int}(E_T)$ we set
\[
g^{e}_\gamma := 
\left\{\begin{array}{rll}
1 & \textnormal{if } \gamma \textnormal{ turns left at } v_i \textnormal{ and right at } v_{i+1},\\
-1 & \textnormal{if } \gamma \textnormal{ turns right at } v_i \textnormal{ and left at } v_{i+1}, \\
0  &  \textnormal{if } \gamma \textnormal{ turns in the same direction at } v_{i} \textnormal{ as it}\\ 
 &  \textnormal{does at } v_{i+1} \textnormal{ or if } e \textnormal{ is not an edge in } \gamma, \\
\end{array}\right.
\]
and we refer to $\textbf{g}(\gamma)$ as the $\textbf{g}$\textbf{-vector} of $\gamma$ (see Figure~\ref{g_vec_ex}). Observe that $\textbf{g}(\gamma)$ is independent of the choice of orientation of $\gamma$. We define the \textbf{zigzag} of $\gamma$ to be the set $Z_\gamma = Z_\gamma^+ \sqcup Z_\gamma^- \subset \text{Int}(E_T)$ of edges $e$ of $T$ such that $g_\gamma^e \neq 0$, where $Z_\gamma^+$ (resp., $Z_\gamma^-$) is the set of edges $e$ such that $g_\gamma^e = 1$ (resp., $g_\gamma^e = -1$). We also let $G(\mathcal{F}):= \{\textbf{g}(\gamma)\}_{\gamma\in \mathcal{F}^{red}\sqcup \mathcal{F}^{gr}}$.


\begin{figure}[h]
\includegraphics[width=.70\textwidth]{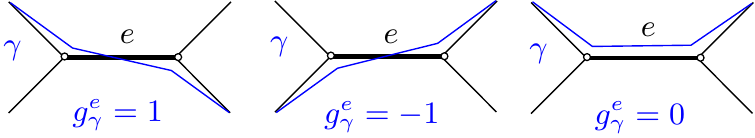}
\centering
\caption{\footnotesize{Different values of $g^e_\gamma$}}
\label{g_vec_ex}
\end{figure}

Next, we let $s_{\gamma,\mathcal{F}} = [v,u]$ denote the acyclic path where $(u,F)$ and $(v,G)$ are the corners at which $\gamma$ is marked in $\mathcal{F}$. We define the $\textbf{c}$\textbf{-vector} of $\gamma$ with respect to $\mathcal{F}$ to be $\textbf{c}_\mathcal{F}(\gamma) := \sum_{e\in s_{\gamma,\mathcal{F}}}\textbf{x}_e \in \mathbb{Z}^n$ (resp., $\textbf{c}_\mathcal{F}(\gamma) := -\sum_{e\in s_{\gamma,\mathcal{F}}}\textbf{x}_e \in \mathbb{Z}^n$) if $\gamma$ is green (resp., red). Note that the \textbf{c}-vector of $\gamma$ depends on the choice of facet $\mathcal{F}$ containing $\gamma$, whereas the \textbf{g}-vector $\textbf{g}(\gamma)$ is intrinsic to $\gamma$. We also let $C(\mathcal{F}) := \{\textbf{c}_\mathcal{F}(\gamma)\}_{\gamma\in \mathcal{F}^{red}\sqcup \mathcal{F}^{gr}}$. 

As the following proposition shows, the \textbf{c}-vectors $C(\mathcal{F})$ and the \textbf{g}-vectors $G(\mathcal{F})$ defined by a given facet are dual bases of $\mathbb{R}^n$.

\begin{proposition}\label{vanish}\cite[Proposition 22]{manneville2017geometric}\label{dualbases_prop}
For any $\gamma, \delta \in \mathcal{F}$ we have $\langle \textbf{g}(\delta), \textbf{c}_\mathcal{F}(\gamma) \rangle \in \{0,1\}$ and equals 1 if and only if $\gamma = \delta$.
\end{proposition}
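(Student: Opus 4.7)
The plan is to unwind the inner product and reduce the computation to a telescoping sum along the path $s_{\gamma,\mathcal{F}}$. Writing $\varepsilon_\gamma = +1$ when $\gamma$ is green and $\varepsilon_\gamma = -1$ when $\gamma$ is red, the definition of $\textbf{c}_\mathcal{F}(\gamma)$ gives
$$\langle \textbf{g}(\delta),\textbf{c}_\mathcal{F}(\gamma)\rangle = \varepsilon_\gamma \sum_{e \in s_{\gamma,\mathcal{F}}} g^e_\delta.$$
Since $g^e_\delta = 0$ whenever $e$ is not an edge of $\delta$, only edges in $\delta \cap s_{\gamma,\mathcal{F}}$ contribute, and because $\delta$ and $s_{\gamma,\mathcal{F}}$ are both simple paths in the tree $T$, their intersection is a single (possibly empty) subpath $P = [a,b]$ whose endpoints are interior vertices of both $T$ and $\delta$.

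Next, I would observe that the definition of $g^e_\delta$ can be rewritten as $g^{(w_i,w_{i+1})}_\delta = \tfrac{1}{2}(L^\delta_{w_i} - L^\delta_{w_{i+1}})$ for edges of $\delta$ between interior vertices, where $L_w^\delta$ equals $+1$ if $\delta$ turns left at $w$ and $-1$ if it turns right (in a fixed orientation of $\delta$). Summing along $P$ then collapses to $\tfrac{1}{2}(L^\delta_a - L^\delta_b)$, so the inner product equals $\tfrac{\varepsilon_\gamma}{2}(L^\delta_a - L^\delta_b) \in \{-1,0,+1\}$.

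For $\gamma = \delta$ the overlap is all of $s_{\gamma,\mathcal{F}} = [v,u]$ and both marked flags carry the color of $\gamma$. Since the two marked corners lie in different regions of $D^2 \setminus \gamma$, unpacking the green/red flag condition in the cyclic order of edges about $v$ and about $u$ forces $\gamma$ to turn in opposite directions at its two marked corners, with $L^\gamma_v - L^\gamma_u = 2\varepsilon_\gamma$, so the inner product equals $1$. For $\gamma \neq \delta$ one needs $L^\delta_a = L^\delta_b$; this is automatic when $P$ is empty, and otherwise the noncrossing condition for $\delta$ and $\gamma$ forces $\delta$, at each endpoint of $P$, to leave $s_{\gamma,\mathcal{F}}$ on the same side of $\gamma$ as the corresponding tail of $\gamma$, since otherwise $\delta$ would cross $\gamma$. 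Combined with the maximality built into $p(v,F) = p(u,G) = \gamma$, and with Lemma~\ref{supp_arcs_lemma} to describe the local picture when $a$ or $b$ coincides with $v$ or $u$, this forces $L^\delta_a = L^\delta_b$.

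I expect the main obstacle to be the last step. The telescoping identity reduces the entire problem to a purely local turning-direction statement at the two ends of the overlap, so the real content is the claim that the noncrossing condition together with marked-corner maximality rules out every configuration in which $L^\delta_a \neq L^\delta_b$. A case analysis on whether each of $a,b$ lies in the interior of $s_{\gamma,\mathcal{F}}$ or coincides with one of its endpoints (where $\delta$ then interacts with the corresponding tail of $\gamma$ beyond its marked corners) should dispatch all subcases and close the argument.
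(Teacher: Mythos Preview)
The paper does not give its own proof of this proposition; it is quoted from \cite[Proposition~22]{manneville2017geometric}, so there is nothing in the present paper to compare your argument against.

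Your telescoping reduction is a sound and natural approach: the identity $g^{(w_i,w_{i+1})}_\delta = \tfrac{1}{2}(L^\delta_{w_i} - L^\delta_{w_{i+1}})$ does collapse the pairing to the boundary term $\tfrac{\varepsilon_\gamma}{2}(L^\delta_a - L^\delta_b)$, and the diagonal case $\gamma = \delta$ is then a direct unwinding of the flag conventions at the two marked corners. The off-diagonal case is, as you anticipate, the substantive part, and it remains only a sketch in your proposal. One caution: your appeal to Lemma~\ref{supp_arcs_lemma} is misplaced, since that lemma describes only the two supporting arcs $\mu,\nu$ of $\gamma$, not an arbitrary arc $\delta \in \mathcal{F}$; the local picture when $a$ or $b$ coincides with $v$ or $u$ must instead be extracted directly from noncrossing together with the maximality $\gamma = p(v,F) = p(u,G)$, which you mention but do not actually use. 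Carrying out the case analysis carefully (interior endpoint of $P$ versus endpoint of $s_{\gamma,\mathcal{F}}$, and on which side of $\gamma$ the arc $\delta$ departs) does close the argument, and this is essentially how the cited reference proceeds.
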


\begin{example}\label{a2_vectors_ex}
Consider the tree in Figure~\ref{A2_vecs} where $\text{Int}(E_T)=\{e_1,e_2\}$. The \textbf{g}- and \textbf{c}-vectors associated to the facet in this figure are as follows:

\begin{center}
\begin{minipage}[c]{.4\textwidth}
\begin{tabular}{l l}
$\mathbf{g}(\gamma)=(-1,0)$ & $\mathbf{c_\mathcal{F}}(\gamma)= (-1, -1)$\\
$\mathbf{g}(\delta)=(-1,1)$ & $\mathbf{c_\mathcal{F}}(\delta)= (0, 1).$\\
\end{tabular}
\end{minipage}
\end{center}  
\end{example}

\begin{figure}[h]
$$\includegraphics[scale=1.4]{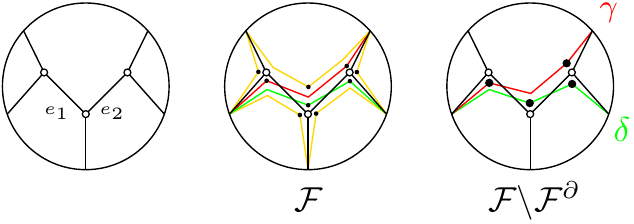}$$
\caption{\footnotesize{The tree and facet $\mathcal{F}$ of $\Delta^{NC}(T)$ from Example~\ref{a2_vectors_ex}.}}
\label{A2_vecs}
\end{figure}

We end this section with a lemma that we will interpret representation-theoretically in the next section. For $s_{\gamma, \mathcal{F}} = (v_0,\ldots, v_t)$, let $C_{s_{\gamma,\mathcal{F}}}$ denote the set of acyclic paths $s = (v_i,\ldots,v_j)$ such that by orienting $s_{\gamma, \mathcal{F}}$ from $v_0$ to $v_t$ one has that

$\bullet$ if $i>0$ then $s$ turns right at $v_i$, and

$\bullet$ if $j<t$ then $s$ turns left at $v_j$.

\noindent Observe that $C_{s_{\gamma, \mathcal{F}}}\backslash\{s_{\gamma,\mathcal{F}}\} $ is non-empty if and only if $s_{\gamma, \mathcal{F}}$ contains at least two edges. An example of the acyclic paths in $C_{s_{\gamma,\mathcal{F}}}$ is shown in Figure~\ref{subseg}.

\begin{lemma}\label{lem1}
Let $\mathcal{F}$ be a facet of $\Delta^{NC}(T)$ with at least one green arc, and let $\gamma \in \mathcal{F}^{red}$ be a red arc such that $s_{\gamma, \mathcal{F}}$ contains at least two edges of $T$. Then there exists a green arc $\delta \in \mathcal{F}^{gr}$ such that  $|Z_\delta^- \cap \{\text{edges of $t$}\}|=|Z_\delta^+ \cap \{\text{edges of $t$}\}|+1$ for any $t \in C_{s_{\gamma, \mathcal{F}}}\backslash \{s_{\gamma,\mathcal{F}}\}$. Moreover, for any arc $\delta \in \mathcal{F}^{gr}$ and any $t \in C_{s_{\gamma, \mathcal{F}}}$, we have that $|Z_\delta^- \cap \{\text{edges of $t$}\}| \geq |Z_\delta^+ \cap \{\text{edges of $t$}\}|$. 
\end{lemma}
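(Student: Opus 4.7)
The plan is to bootstrap from Proposition~\ref{dualbases_prop} and then extend the inequality from the full path $s_{\gamma,\mathcal{F}}$ to proper sub-paths by a combinatorial/geometric analysis, saving the construction of the specific green arc for last.

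For the full path $t = s_{\gamma,\mathcal{F}}$, the inequality in the second assertion is immediate from duality: since $\gamma$ is red, $\textbf{c}_\mathcal{F}(\gamma) = -\sum_{e\in s_{\gamma,\mathcal{F}}} \textbf{x}_e$, and for any green $\delta$ (necessarily distinct from $\gamma$), Proposition~\ref{dualbases_prop} gives $\langle \textbf{g}(\delta), \textbf{c}_\mathcal{F}(\gamma)\rangle \in \{0,1\}$. Rewriting,
\[
|Z_\delta^- \cap s_{\gamma,\mathcal{F}}| - |Z_\delta^+ \cap s_{\gamma,\mathcal{F}}| \;=\; -\sum_{e \in s_{\gamma,\mathcal{F}}} g^e_\delta \;=\; \langle \textbf{g}(\delta), \textbf{c}_\mathcal{F}(\gamma)\rangle \in \{0,1\},
\]
which is the inequality in the full-path case.

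For a proper sub-path $t = (v_i, \ldots, v_j) \in C_{s_{\gamma,\mathcal{F}}}$, the analysis would proceed by examining how $\delta$ interacts with $t$ locally. The nonzero contributions to $|Z_\delta^\pm \cap t|$ come from edges of $s_{\gamma,\mathcal{F}}$ that $\delta$ traverses while zigzagging; these edges partition into maximal common sub-paths of $\delta$ and $s_{\gamma,\mathcal{F}}$. At each end of such a maximal common sub-path, $\delta$ either terminates at a leaf or turns off $s_{\gamma,\mathcal{F}}$, and the direction of turn-off is constrained by the fact that $\delta$ and $\gamma$ are noncrossing. Combined with the turn conditions built into the definition of $C_{s_{\gamma,\mathcal{F}}}$ — a right turn at $v_i$ when $i > 0$ and a left turn at $v_j$ when $j < t$ — a careful sign-tracking on each maximal common sub-path inside $t$ should yield $|Z_\delta^- \cap t| \ge |Z_\delta^+ \cap t|$.

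For the existence claim, the idea is to exhibit a green arc $\delta$ that agrees with $\gamma$ along all of $s_{\gamma,\mathcal{F}}$ but exits on the opposite (green) side at both endpoints, so that the flags at its marked corners are both green. Lemma~\ref{supp_arcs_lemma} provides the supporting arcs $\mu, \nu$ of $\gamma$, which agree with $\gamma$ on $s_{\gamma,\mathcal{F}}$ and differ at one endpoint each; iterating the supporting-arc construction (equivalently, taking a maximal chain in $\mathcal{F}$ of arcs agreeing with $\gamma$ along $s_{\gamma,\mathcal{F}}$) should terminate at a green arc $\delta \in \mathcal{F}^{gr}$ of the required form. The equality $|Z_\delta^- \cap t| = |Z_\delta^+ \cap t| + 1$ for each proper $t$ would then follow by computing $g^e_\delta$ edge by edge, using that $\delta$ matches the turn pattern of $\gamma$ on $s_{\gamma,\mathcal{F}}$ and then exits oppositely at the boundary turns that define $t$. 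The main obstacle is the existence part: one needs a single green arc $\delta \in \mathcal{F}$ that simultaneously realizes the sharp equality on every proper $t \in C_{s_{\gamma,\mathcal{F}}}$. Showing that the iterative supporting-arc construction terminates at a green arc (using the hypothesis $\mathcal{F}^{gr} \neq \emptyset$) and that the terminal arc preserves the zigzag structure of $\gamma$ along $s_{\gamma,\mathcal{F}}$ will require careful case analysis at the successive marked corners and on flag colors.
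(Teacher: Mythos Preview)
Your overall strategy---iterating the supporting-arc construction to manufacture a green arc containing $s_{\gamma,\mathcal{F}}$, together with a noncrossing analysis to bound the signs on each proper $t$---is exactly the paper's approach. The paper organises it as three claims: Claim~1 is the iterated supporting-arc construction (terminating by finiteness of $\mathcal{F}$), Claim~3 is the sign inequality via a crossing contradiction (if the first and last nonzero $g_\delta^e$ on $t$ were both $+1$, one checks directly that $\delta$ and $\gamma$ would cross), and Claim~2 deduces the equalities from Claim~1. Your use of Proposition~\ref{dualbases_prop} for $t=s_{\gamma,\mathcal{F}}$ is a neat shortcut not made explicit in the paper.

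The genuine gap is in how you specify the green arc. To make the first and last nonzero $g^e_\delta$ on $t=[v_i,v_j]$ both equal to $-1$, one needs $\delta$ to turn \emph{the same way} as $\gamma$ at $v_i$ and at $v_j$ (right at $v_i$, left at $v_j$), not ``oppositely''. At interior vertices of $s_{\gamma,\mathcal{F}}$ this is automatic once $\delta\supset s_{\gamma,\mathcal{F}}$; but when $v_i=v$ or $v_j=u$ it forces $\delta$ to contain the corner $(v,F)$ or $(u,G)$ respectively. The iterated supporting-arc construction only controls \emph{one} endpoint: starting from $\mu\lessdot_{(v,F)}\gamma$ and iterating yields a green $\delta$ containing $s_{\gamma,\mathcal{F}}$ and the corner $(v,F)$, but at $u$ this $\delta$ carries the neighbouring corner $(u,I)$ (with $I$ immediately counterclockwise from $G$) rather than $(u,G)$, so it turns right there instead of left. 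The paper therefore case-splits on $t$: when $v_j\neq u$ it uses the green arc through $(v,F)$, and when $v_j=u$ (so necessarily $v_i\neq v$) it runs the dual construction to obtain a green arc through $(u,G)$. Thus the green arc produced may depend on $t$; despite the literal quantifier order in the statement, what is actually shown---and all that Theorem~\ref{thm:thm1} uses---is that for each proper $t$ there exists a green $\delta$ with the required equality.
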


\begin{figure}[h]
\includegraphics[width=.55\textwidth]{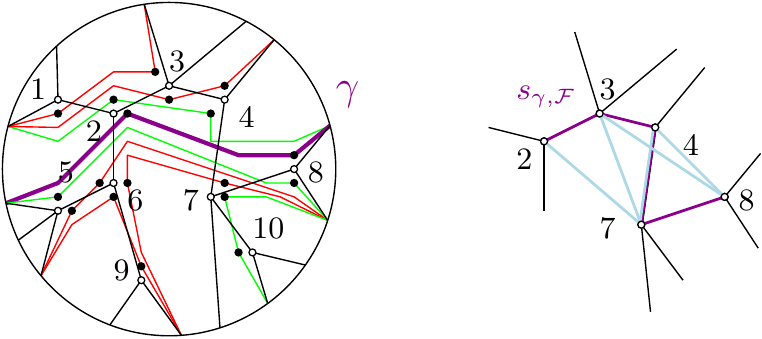}
\centering
\caption{\footnotesize{For $\gamma$ we have $C_{s_{{\gamma},\mathcal{F}}}=\{  [2,8], [2,7] , [3,8], [3,7], [4,8], [4,7]\}$ which appear in light blue, except for $s_{\gamma, \mathcal{F}} = [2,8]$ which is purple. Note that orienting $s_{\gamma, \mathcal{F}}$ from 2 to 8, we have that $s_{\gamma, \mathcal{F}}$ turns right at 3 and 4, and left at 7}.}
\label{subseg}
\end{figure}

\begin{proof}The statement follows from establishing three claims. In each of the following arguments, let $(v,F)$ and $(u,G)$ denote the corners at which $\gamma$ is marked, and orient $s_{\gamma, \mathcal{F}} = [v,u]$ from $v$ to $u$. At times, we will also write $s_{\gamma, \mathcal{F}} = (v_0, \ldots, v_t)$ where $v = v_0$ and $v_t = u$.

\textit{\textbf{Claim 1:}} Let $(v_i,H)$ be a corner contained in $\gamma$ for some $i \in \{1, \ldots, t-1\}$. If $H \in \text{Reg}(\gamma,G)$, then there exists a green arc $\delta$ containing the acyclic path $s_{\gamma, \mathcal{F}}$ and the corner $(v,F)$. Dually, if $H \in \text{Reg}(\gamma,F)$, then there exists a green arc $\delta$ containing the acyclic path $s_{\gamma, \mathcal{F}}$ and the corner $(u,G)$.



Let $\mu \lessdot_{(v,F)} \gamma$ and $\nu \lessdot_{(u,G)} \gamma$ be the supporting arcs of $\gamma$ and orient them so that they agree with the orientation of $s_{\gamma, \mathcal{F}}$. As $\mu$ and $\nu$ agree at $s_{\gamma, \mathcal{F}}$, they both contain $(v_i, H)$. Without loss of generality, suppose that $H \in \text{Reg}(\gamma, G)$. It follows that $\mu$ is not a boundary arc.

{Let $I$ be the face immediately counterclockwise from $G$ about $u$. The arc $\mu$ is marked at $(u, I)$ and it turns right at $u$, as $\gamma$ is a red arc. Let $(u', I')$ be the other corner at which $\mu$ is marked. If $u'$ comes before $u$ in the given orientation of $\mu$, then  $\mu$  is a green arc and the result holds.}

{Suppose now that $u'$ comes after $u$. Then $\mu$ is red. Set $\mu'$ to be the supporting arc of $\mu$ such that $\mu' \lessdot_{(u, I)} \mu$. Since $\mu'$ must be marked at $(u', I'')$ where $I''$ is immediately counterclockwise from $I'$ about $u'$, then $\mu$ and $\mu'$ separate at $u'$. By Lemma~\ref{supp_arcs_lemma}, $\mu'$ also contains $s_{\gamma, \mathcal{F}}$ and the corner $(v,F)$, since this is true of arc $\mu$. In particular, $\mu'$ contains $(v_i,H)$ and $H\neq I''$. Thus $\mu'$ is not a boundary arc.}

{Let $(w,J)$ denote the other corner at which $\mu'$ is marked and orient $\mu'$ so that it agrees with the orientation of $s_{\gamma, \mathcal{F}}$. Note that $\mu'$ must turn right at $u'$. As before, if $w$ comes before $u^\prime$, then $\mu'$ is green and the result holds. Otherwise, we repeat the above argument and find a non-boundary supporting arc of $\mu'$ containing $s_{\gamma, \mathcal{F}}$ and the corner $(v,F)$. Since there are finitely many arcs in $\mathcal{F}$, this process must stop at some green arc $\delta$ containing $s_{\gamma, \mathcal{F}}$ and the corner $(v,F)$.}

\textit{\textbf{Claim 2:}} There exists an arc $\delta \in \mathcal{F}^{gr}$ that satisfies $\{ \text{edges in $t$}\} \cap Z_{\delta} \neq \emptyset$  and   $|Z_\delta^- \cap \{\text{edges of $t$}\}|=|Z_\delta^+ \cap \{\text{edges of $t$}\}|+1$ for any $t \in C_{s_{\gamma, \mathcal{F}}}\backslash \{s_{\gamma,\mathcal{F}}\}$. 

{Let $t= [v_i, v_j]\in C_{s_{\gamma, \mathcal{F}}}\backslash \{s_{\gamma,\mathcal{F}}\}$ where vertex $v_i$ comes before $v_j$ according to the orientation of $s_{\gamma,\mathcal{F}}$. We prove the assertion in each of the following three cases.}

\textbullet \quad $v_i \neq v$ and $v_j \neq u$

{Since we know that $t \in C_{s_{\gamma, \mathcal{F}}}$, the arc $\gamma$ turns left at $v_j$ and right at $v_i$. In particular, $\gamma$ contains a corner $(v_j, H)$ where $H \in \text{Reg}(\gamma, G)$. By Claim 1, this implies that there exists a green arc $\delta$ containing $s_{\gamma,\mathcal{F}}$ and containing the corner $(v,F).$ Since $\delta$ contains $s_{\gamma,\mathcal{F}}$, we have that $\delta$ also turns left at $v_j$ and right at $v_i$. That is,  $Z_{\delta} \cap \{\text{edges in $t$}\}  \neq \emptyset$.}

{Now let $e$ and $e'$  be the first and last edges of $t$ contained in $Z_{\delta} \cap \{ \text{edges in $t$}\}$ with respect to the chosen orientation of $s_{\gamma,\mathcal{F}}$. Since the arc $\delta$ turns right at $v_i$, we have that $g_{\delta}^{e}=-1$. Similarly, since $\delta$ turns left at $v_j$, the last change of direction must be from right to left and again $g_{\delta}^{e'}=-1$. Observe that the coordinates of $\textbf{g}(\delta)$ associated to edges in $Z_{\delta} \cap \{\text{edges of $t$}\} $ alternate in sign when ordered in a way that is consistent with the orientation of $s_{\gamma, \mathcal{F}}$. Moreover, the first and last of these nonzero coordinates of $\textbf{g}(\delta)$ are $-1$. Thus $|Z_\delta^- \cap \{\text{edges of $t$}\}|=|Z_\delta^+ \cap \{\text{edges of $t$}\}|+1$.}

\textbullet \quad $v_i=v$ and $v_j \neq u$

{As in the previous case, the arc $\gamma$ contains a corner $(v_j, H)$ where $H \in \text{Reg}(\gamma, G)$. Therefore, Claim 1 implies that there exists a green arc $\delta$ containing $s_{\gamma,\mathcal{F}}$ and containing the corner $(v,F).$ Since $\delta$ contains $s_{\gamma,\mathcal{F}}$ and the corner $(v,F)$, we have that $\delta$ also turns left at $v_j$ and right at $v_i$. The remainder of the argument in the previous case may now be applied to this case.}

\textbullet \quad $v_i \neq v$ and $v_j = u$

{By the definition of $C_{s_{\gamma, \mathcal{F}}}$ and that $t \in C_{s_{\gamma, \mathcal{F}}}$, the arc $\gamma$ turns right at $v_i$. In particular, $\gamma$ contains a corner $(v_i, H)$ where $H \in \text{Reg}(\gamma, F)$. By the dual statement in Claim 1, this implies that there exists a green arc $\delta$ containing $s_{\gamma,\mathcal{F}}$ and containing the corner $(u,G).$ Since $\delta$ contains $s_{\gamma,\mathcal{F}}$ and the corner $(u,G)$, we have that $\delta$ also turns left at $v_j$ and right at $v_i$. One may now adapt the argument of the first case to this case.}

\textit{\textbf{Claim 3:}} For any  arc $\delta \in \mathcal{F}$ and any $t \in C_{s_{\gamma,\mathcal{F}}}$, we have that $|Z_\delta^- \cap \{\text{edges of $t$}\}| \geq |Z_\delta^+ \cap \{\text{edges of $t$}\}|$.

If $Z_\delta\cap \{\text{edges of $t$}\} = \emptyset$, then the result holds. Therefore, we assume this intersection is non-empty.

Let $t = (v_i,\ldots, v_j)$, and let $(u^\prime_1,u_1)$ (resp., $(u_2,u^\prime_2)$) denote the first (resp., last) edges in $Z_\delta\cap \{\text{edges of $t$}\}$ with respect to the chosen orientation of $s_{\gamma, \mathcal{F}}$. Since the coordinates of $\textbf{g}(\gamma)$ associated to edges in $Z_{\delta} \cap \{\text{edges of $t$}\}$ alternate in sign when ordered in a way that is consistent with the orientation of $s_{\gamma, \mathcal{F}}$, it is enough to show that $\textbf{g}(\delta)$ does not satisfy $g_\delta^{(u^\prime_1,u_1)} = g_\delta^{(u_2,u^\prime_2)}=1$. Since $t \in C_{s_{\gamma,\mathcal{F}}}$, we see that the desired result holds when $\delta = \gamma$.

Now suppose that $g_\delta^{(u^\prime_1,u_1)} = g_\delta^{(u_2,u^\prime_2)}=1$ for some arc $\delta \in \mathcal{F}\backslash\{\gamma\}$. Orient $\delta$ in a way that is consistent with the orientation of $s_{\gamma, \mathcal{F}}$. Let $(v_i,F_i)$ and $(v_j,F_j)$ be corners of $T$ contained in $\gamma$. Since $t \in C_{s_{\gamma,\mathcal{F}}}$, there exists a face $G_i$ (resp., $G_j$) that is immediately counterclockwise from $F_i$ about $v_i$ (resp., from $F_j$ about $v_j$), and where $F_i$ and $G_i$ (resp., $F_j$ and $G_j$) are both incident to the edge $(v_i,v_{i+1})$ (resp., $(v_{j-1},v_j)$).

Since $(u^\prime_1,u_1)$ is the first element of $Z_\delta\cap \{\text{edges of $t$}\}$ contained in $\delta$ and since $\delta$ turns left at $u_1^\prime$, we know that $\delta$ turns left at each vertex in $\{v_i, v_{i+1},\ldots, u^\prime_1\}$. Similarly, $\delta$ turns right at each vertex in $\{u^\prime_2, \ldots, v_{j-1}, v_j\}$. Consequently, we obtain that $\delta$ contains the corners $(v_i,G_i)$ and $(v_j,G_j)$ (see Figure~\ref{claim3_figure}). We obtain that $\text{Reg}(\gamma,F_i) \not \subset \text{Reg}(\delta,F_i)$, which implies that $\delta$ and $\gamma$ are crossing, a contradiction.
\end{proof}

\begin{figure}
$$\includegraphics[scale=1.3]{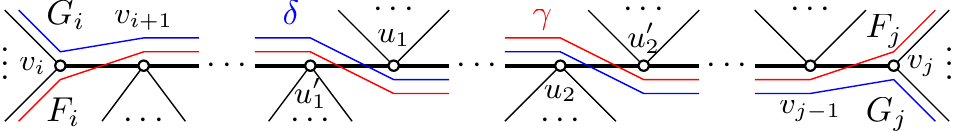}$$
\caption{The configuration of the arcs $\gamma$ and $\delta$ from Claim 3.}
\label{claim3_figure}
\end{figure}

\section{Tiling algebras}\label{sec_til_alg}

We now recall how a tree gives rise to a finite dimensional algebra. Given a tree $T$, let $Q_T$ be the quiver whose vertex set is $\text{Int}(E_T)$ and where $e, e' \in \text{Int}(E_T)$ are connected by an arrow in $Q_T$ if they meet in a corner of $T$. By convention, 
$e \: \overset{\alpha}{\rightarrow} \: e'$
if and only if $e'$ is immediately counterclockwise from $e$ about their common vertex. {That is, there is an injective map from the set of arrows of $Q_T$ to the set of corners of $T$.} {We define $I_T \subset \Bbbk Q_T$ to be the ideal generated by the relations $\alpha \beta$ where $\alpha : e_2 \rightarrow e_3$ (resp., $\beta : e_1 \rightarrow e_2$) corresponds to the corner $(v,F)$ (resp., $(v,G)$) and the face $F$ is immediately counterclockwise from the face $G$ about $v$.}

We define the \textbf{tiling algebra} of $T$ to be $\Lambda_T := \Bbbk Q_T/I_T$ where $\Bbbk$ is an algebraically closed field. Tiling algebras are a family of representation finite gentle algebras that were introduced in \cite{simoes2017endomorphism}. We invite the reader to check that $\text{dim}_\Bbbk\Lambda_T = 3$ (resp., 26) when $T$ is the tree from Figure~\ref{A2_vecs} (resp., Figure~\ref{tree_quiver}).

The category of finitely generated left modules over $\Lambda_T$ is equivalent to category of finite dimensional representations of $Q_T$ over $\Bbbk$ that are \textbf{compatible} with the relations from  $I_T$ (i.e., a representation $M=((M_i)_{i \in {Q_T}_0}, (\varphi_\alpha)_{\alpha \in {Q_T}_1})$ of $Q_T$ where for any $\sum_{i = 1}^k c_i\alpha^{(i)}_{1}\cdots \alpha^{(i)}_{\ell_i} \in I_T$ where $c_i \in \Bbbk$ for all $i \in \{1,\ldots, k\}$ we have that $\sum_{i = 1}^k c_i\varphi_{\alpha^{(i)}_{1}}\cdots \varphi_{\alpha^{(i)}_{\ell_i}} = 0$).\footnote{For a general finite dimensional $\Bbbk$-algebra $\Lambda = \Bbbk Q/I$ where $I$ is an admissible ideal, one can also equivalently describe modules over $\Lambda$ as representations of $Q$ compatible with $I$.} In the case of tiling algebras, the compatibility condition becomes $\varphi_{\alpha}\varphi_{\beta} = 0$ for all $\alpha\beta\in I_T$. 

We also know that the indecomposable $\Lambda_T$-modules are \textbf{string modules}, which we denote by $M(w)$. This follows from the fact that tiling algebras are gentle algebras, which was first observed in \cite[Proposition 3.2]{simoes2017endomorphism}. Let $({Q_T}_1)^{-1}$ denote the set of formal inverses of arrows of $Q_T$. Given $\alpha \in {Q_T}_1$, an arrow of $Q_T$, let $\textsf{s}(\alpha)$ (resp., $\textsf{t}(\alpha)$) denote the source and target of the arrow $\alpha$. Similarly, for any $\alpha^{-1} \in ({Q_T}_1)^{-1}$ define
$\textsf{s}(\alpha^{-1}):=\textsf{t}(\alpha)$ and $\textsf{t}(\alpha^{-1}):=\textsf{s}(\alpha)$.  A \textbf{string} in $\Lambda_T$ is a word $w = \alpha_d^{\epsilon_d}\cdots\alpha_1^{\epsilon_1}$ in the alphabet ${Q_T}_1 \sqcup ({Q_T}_1)^{-1}$ with $\epsilon_i \in \{\pm 1\}$, for all $i \in \{1,2,\ldots,d\}$, which satisfies the following conditions:

\begin{enumerate}
\item $\textsf{s}(\alpha_{i+1}^{\epsilon_{i+1}})= \textsf{t}(\alpha_i^{\epsilon_i})$ and $ \alpha_{i+1}^{\epsilon_{i+1}} \neq \alpha_i^{-\epsilon_i}$, for all $i\in \{1,\ldots,d-1 \}$, and
\item $w$ and also $w^{-1}:= \alpha_1^{-\epsilon_1}\cdots\alpha_d^{-\epsilon_d}$ do not contain a subpath in $I$.
\end{enumerate}
For each vertex $i \in {Q_T}_0$, there is also a string whose string module is the unique simple representation of $Q_T$ supported only at vertex $i$. By abuse of notation, we write $w = i$ where $i \in {Q_T}_0$ for such strings. In other words, a string is an irredundant walk in $Q_T$ that avoids the relations in $I_T$.

Let $w = \alpha_d^{\epsilon_d}\cdots\alpha_1^{\epsilon_1}$ be a string in $\Lambda_T$. In analogy with the above definition, define $\textsf{s}(w):= \textsf{s}(\alpha_1^{\epsilon_1})$ and $\textsf{t}(w):= \textsf{t}(\alpha_d^{\epsilon_d})$. In the case where $w = i$ for some $i \in {Q_{T}}_0$, we set $\textsf{s}(w) := i$ and $\textsf{t}(w) := i.$ It will also be useful to define a \textbf{substring} $w^\prime$ of $w$ as a string of the form $w^\prime = \alpha_k^{\epsilon_k}\cdots \alpha_j^{\epsilon_j}$ with $1 \le j \le k \le d$ or $w^\prime = i$ where $i$ is a vertex appearing in $w$.

In the setting of tiling algebras, all strings are supported on connected acyclic subgraphs of $Q_T$, but this is not the case in general. The \textbf{string module} $M(w)$ is the representation of $Q_T$ obtained by assigning the vector space $\Bbbk$ to each vertex in the string $w$ and identity morphisms to each arrow in $w$.\footnote{For simplicity, we have given the definition of $M(w)$ only in the generality of tiling algebras.} 

Using these facts, we obtain that the indecomposable $\Lambda_T$-modules are parameterized by \textbf{segments} of $T$ (i.e., acyclic paths $s=(v_0, v_1,...,v_t)$ whose endpoints are interior vertices of $T$ and any two consecutive edges $(v_{i-1},v_i)$ and $(v_i,v_{i+1})$  are incident to a common face) \cite[Corollary 4.3]{garver_oriented_rep_thy}. Let $w(s)$ denote the unique string in $\Lambda_T$ corresponding to the segment $s \in \text{Seg}(T)$, the set of all segments of $T$, and $M(w(s))$ the corresponding string module.  We show an example of this bijection in Figure~\ref{ind_to_segs}. Using this bijection, one obtains the following lemma.

\begin{lemma}\label{lem2}
Given a facet $\mathcal{F}$ of $\Delta^{NC}(T)$ and $\gamma \in \mathcal{F}$, the set map $C_{s_{\gamma, \mathcal{F}}} \to \text{mod}(\Lambda_T)$ defined by $t \mapsto M(w(t))$ induces a bijection between elements of $C_{s_{\gamma, \mathcal{F}}}$ and the indecomposable submodules of $M(w(s_{\gamma, \mathcal{F}})).$
\end{lemma}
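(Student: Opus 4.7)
The statement will reduce to the standard classification of indecomposable submodules of string modules over the gentle algebra $\Lambda_T$, once segments and strings are matched combinatorially. I would proceed in three steps.

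First, I would establish the following submodule criterion. For any string $w = \alpha_d^{\epsilon_d} \cdots \alpha_1^{\epsilon_1}$ over $\Lambda_T$ with string vertices $u_0, u_1, \ldots, u_d$, the indecomposable submodules of $M(w)$ correspond bijectively to sub-intervals $\{u_a, u_{a+1}, \ldots, u_b\}$ with $0 \le a \le b \le d$ satisfying (i) if $a > 0$, then $\epsilon_a = +1$, and (ii) if $b < d$, then $\epsilon_{b+1} = -1$. This I would verify directly: the subspace of $M(w)$ spanned by the basis vectors at these string vertices is a subrepresentation iff no arrow of $Q_T$ sends one of them to a basis vector outside the subset. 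Because arrows of $Q_T$ not appearing in $w$ act as $0$ on $M(w)$, the only potential violations occur at the two boundary positions, and conditions (i) and (ii) are exactly what rules them out.

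Second, I would set up the dictionary between the turning behavior of $s_{\gamma, \mathcal{F}} = (v_0, \ldots, v_t)$ and the exponents $\epsilon_i$ of $w(s_{\gamma, \mathcal{F}})$. The string vertices of $w(s_{\gamma, \mathcal{F}})$ are the edges $e_k = (v_{k-1}, v_k)$ in order $e_1, \ldots, e_t$, so I write $u_k := e_{k+1}$ for $k = 0, \ldots, t-1$. At each interior vertex $v_i$ of $s_{\gamma, \mathcal{F}}$, the edges $e_i$ and $e_{i+1}$ bound a common face of $T$, so there is a unique arrow $\alpha_i$ of $Q_T$ between them corresponding to a corner at $v_i$. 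Using the convention that $e \to e'$ in $Q_T$ whenever $e'$ is immediately counterclockwise from $e$ about the shared vertex, a direct planar check with $s_{\gamma, \mathcal{F}}$ oriented from $v_0$ to $v_t$ shows: a right turn at $v_i$ makes $e_{i+1}$ immediately counterclockwise from $e_i$, so that $\alpha_i : e_i \to e_{i+1}$ and hence $\epsilon_i = +1$; a left turn gives $\epsilon_i = -1$.

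Third, I would combine the two steps. A sub-segment $t = (v_i, \ldots, v_j)$ of $s_{\gamma, \mathcal{F}}$ corresponds, via $u_k = e_{k+1}$, to the sub-interval of string vertices $\{u_i, u_{i+1}, \ldots, u_{j-1}\}$. Setting $a = i$ and $b = j - 1$, the submodule conditions become: if $i > 0$, then $\epsilon_i = +1$, i.e., $s_{\gamma, \mathcal{F}}$ turns right at $v_i$; and if $j < t$, then $\epsilon_j = -1$, i.e., $s_{\gamma, \mathcal{F}}$ turns left at $v_j$. These are exactly the defining conditions of $C_{s_{\gamma, \mathcal{F}}}$. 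Since distinct sub-intervals produce submodules of $M(w(s_{\gamma, \mathcal{F}}))$ with distinct supports in $Q_T$, the map $t \mapsto M(w(t))$ is the desired bijection.

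The main technical obstacle will be the planar verification in the second step: the direction of arrows in $Q_T$ is determined by the ambient counterclockwise orientation of the disk, while right versus left turn depends on the chosen orientation of $s_{\gamma, \mathcal{F}}$, so the sign dictionary requires a careful diagram at a single interior vertex. Once the dictionary is in hand, the rest of the argument is routine bookkeeping.
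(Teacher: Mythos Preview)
Your proposal is correct and follows essentially the same approach as the paper's proof: both identify indecomposable submodules of $M(w(s_{\gamma,\mathcal{F}}))$ with substrings whose boundary arrows point inward, then translate this arrow condition into the right/left turning conditions defining $C_{s_{\gamma,\mathcal{F}}}$. Your version is simply more explicit, spelling out the submodule criterion and the turn-to-exponent dictionary that the paper compresses into a single sentence; the paper likewise leaves the planar verification of that dictionary implicit, so your flagged ``main technical obstacle'' is exactly the step both arguments treat informally.
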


\begin{proof}
The data of a proper indecomposable submodule $M(w)$ of $M(w(s_{\gamma,\mathcal{F}}))$ is equivalent to the data of a substring $w$ of $w(s_{\gamma,\mathcal{F}})$ with the property that there exist strings $w^1 = \alpha_d^{\epsilon_d}\cdots \alpha_1^{\epsilon_1}$ and $w^2 = \beta_c^{\epsilon_c}\cdots \beta_1^{\epsilon_1}$ in $\Lambda_T$ (at most one of which may be the empty string) such that $w(s_{\gamma, \mathcal{F}}) = w^1ww^2$ where $\textsf{s}(\alpha_1^{\epsilon_1}) = \textsf{t}(w)$ and $\textsf{t}(\beta_c^{\epsilon_c}) = \textsf{s}(w)$. Note that the bijection between segments of $T$ and indecomposable $\Lambda_T$-modules implies that there is a unique segment $t \in \text{Seg}(T)$ such that $w = w(t).$ The condition that $\textsf{s}(\alpha_1^{\epsilon_1}) = \textsf{t}(w)$ and $\textsf{t}(\beta_c^{\epsilon_c}) = \textsf{s}(w)$ is equivalent to the arc $\gamma$ turning at the endpoints of $t$ in such a way that $t \in C_{s_{\gamma, \mathcal{F}}}.$\end{proof}

\begin{figure}
$$\begin{array}{cccc} \includegraphics[scale=1.2]{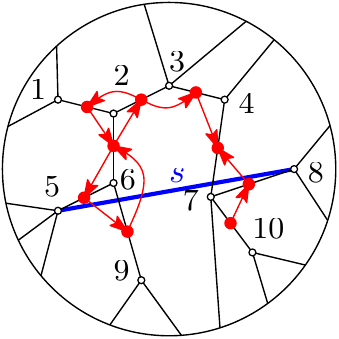} & \includegraphics[scale=1.2]{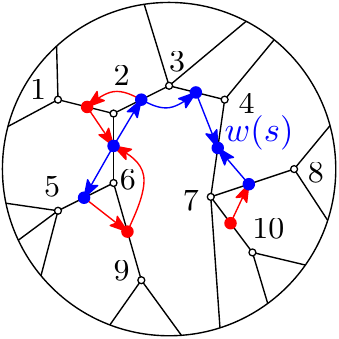} & \raisebox{1.2in}{\begin{xy} 0;<1pt,0pt>:<0pt,-1pt>:: 
(0,0) *+{0} ="0",
(30,0) *+{\Bbbk} ="1",
(15,30) *+{\Bbbk} ="2",
(0,60) *+{\Bbbk} ="3",
(30,75) *+{0} ="4",
(60,0) *+{\Bbbk} ="5",
(75,30) *+{\Bbbk} ="6",
(90,60) *+{\Bbbk} ="7",
(65,75) *+{0} ="8",
"1", {\ar_0"0"},
"0", {\ar_0"2"},
"2", {\ar_1"1"},
"1", {\ar^1"5"},
"2", {\ar_1"3"},
"4", {\ar_0"2"},
"3", {\ar_0"4"},
"5", {\ar^1"6"},
"7", {\ar_1"6"},
"8", {\ar_0"7"},
\end{xy}}  \\ {segment \ s = [5,8]} & {string \ w(s)} &  {string \ module \ M(w(s))} \end{array}$$
\caption{A segment $s = [5,8]$, the string $w(s)$ that it defines, and the corresponding string module $M(w(s))$.}
\label{ind_to_segs}
\end{figure}

\begin{remark}\label{remark_Ks}
Let $s = (v_0,\ldots, v_t)$ be any segment of $T$. One defines $K_s \subset \text{Seg}(T)$ to be the collection of segments $(v_i,\ldots, v_j)$ such that by orienting $s$ from $v_0$ to $v_t$ one has that
\begin{itemize}
\item if $i>0$ then $s$ turns left at $v_i$, and
\item if $j < t$ then $s$ turns right at $v_j$.
\end{itemize}
A small modification to the proof of Lemma~\ref{lem2} shows that the set map $K_s \to \text{mod}(\Lambda_T)$ defined by $t \mapsto M(w(t))$ induces a bijection between elements of $K_s$ and the indecomposable quotient modules of $M(w(s))$. 
\end{remark}

\section{Noncrossing tree partitions}\label{Sec_noncrossing_tree_part}
Now let $V^{\circ}_T$ denote the set of interior vertices of $T$, and choose a real number $\epsilon > 0$ so that the balls of radius $\epsilon$ around each vertex in $V^\circ_T$ do not intersect each other and are contained in $D^2$. Furthermore, we require that any $\epsilon$-ball centered at a vertex $v$ may only contain points from edges of $T$ that have $v$ as an endpoint. For each corner $(v,F)$, fix a point $z(v,F)$ in the interior of $F$ such that $d(z(v,F),v)=\epsilon$ where $d(- , -)$ is the usual Euclidean metric on $D^2$. Let $T_\epsilon \subset D^2$ denote the set of points belonging to the embedded tree $T$ plus the set of points belonging to the $\epsilon$-ball around some interior vertex. More explicitly, \[T_{\epsilon}=T\cup\bigcup_{v\in V^o}\{x\in D^2:\ d(x,v) < \epsilon\}.\] Additionally, for any $s \in \text{Seg}(T)$, let $s_\epsilon$ denote the set of points on an edge of $s$ whose distance is at least $\epsilon$ from any interior vertex of $T$.

Let  $(v,e,F)$ and $(u,e',G)$  be two green (resp., red) flags such that $[v,u]$ is a segment, a \textbf{green} (resp., \textbf{red}) \textbf{admissible curve} for $[v,u]$ is a simple curve $\sigma:[0,1] \rightarrow D^2$ for which $\sigma(0)=z(v,F)$, $\sigma(1)=z(u,G),$ and $\sigma([0,1])\subseteq D^2\backslash(T_\epsilon\backslash[u,v]_\epsilon)$.\footnote{Up to coloring-preserving isotopy relative to $z(v,F)$ and $z(u,G)$, there is a unique green (resp., red) admissible curve for $[v,u]$.} Strictly speaking, the endpoints of $\sigma$ are $z(v,F)$ and $z(u,G)$. However, for convenience, we will often refer to the endpoints of $\sigma$ as simply $v$ and $u$ when it is not necessary to specify the exact corners where $\sigma$ begins and ends.

Two segments are \textbf{noncrossing} if they admit admissible curves that do not intersect each other, otherwise they are \textbf{crossing}. A segment is   \textbf{green} (resp., \textbf{red}) if it is represented by a green (resp., red) admissible curve. For $B\subset V^\circ_T$, let Seg$_r(B)$ be the set of \textbf{inclusion-minimal} red segments whose endpoints lie in $B$. That is, there do not exist distinct segments $s, t \in \text{Seg}_r(B)$ where $t$ is a subsegment of $s$. We define $\text{Seg}_g(B)$ to be the set of \textbf{inclusion-minimal} green segments whose endpoints lie in $B$.

A \textbf{noncrossing tree partition} $\textbf{B}=\{{B}_1,{B}_2,...,{B}_\ell\}$ is a set partition of $V^\circ_T$ such that any two segments of Seg$_r(\textrm{\textbf{B}}):=\bigcup_{i=1}^{l} \textnormal{Seg}_r(B_i)$ are noncrossing and each block of \textbf{B} is \textbf{segment-connected} (i.e., for any two vertices in $B_i$ there exists a sequence of segments in $\text{Seg}_r(B_i)$ that joins them). Let NCP($T$) be the poset of noncrossing tree partitions of $T$ ordered by refinement. Alternatively, we could have defined noncrossing tree partitions using the set $\text{Seg}_g(\textbf{B}) := \bigcup_{i=1}^{l} \textnormal{Seg}_g(B_i)$, and this definition produces the same lattice of noncrossing tree partitions.

Returning to tiling algebras, a full, additive subcategory $\mathcal{W} \subset \textnormal{mod}(\Lambda_T)$ is a \textbf{wide subcategory} if it is exact abelian and \textbf{extension closed} (i.e., for any short exact sequence $0\to X \to Z \to Y \to 0$ with $X,Y \in \mathcal{W}$, one has that $Z \in \mathcal{W}$). We remark that all subcategories we work with in this paper are assumed to be closed under isomorphisms. Under this assumption, the collection of all wide subcategories of $\text{mod}(\Lambda_T)$, denoted $\text{wide}(\Lambda_T)$, is a set. In fact, $\text{wide}(\Lambda_T)$ is a poset under inclusion. The intersection of two wide subcategories is a wide subcategory, and the zero subcategory (resp., mod($\Lambda_T$)) is the unique minimal (resp., unique maximal) element of wide$(\Lambda_T)$. As  $\Lambda_T$ is representation finite, the poset wide$(\Lambda_T)$ is finite. Therefore, wide$(\Lambda_T)$ is a lattice.

In \cite[Theorem 7.1]{garver_oriented_rep_thy}, the second author and McConville obtained a poset isomorphism between the lattice of noncrossing tree partitions and the lattice of wide subcategories given by \[\begin{array}{rcl}\rho : \textnormal{NCP}(T) &\longrightarrow & \textnormal{wide}(\Lambda_T) \\
\textnormal{\textbf{B}} & \longmapsto &\text{add}\left(\displaystyle \bigoplus_{w(s)} M(w(s)) \ | \ s \in \overline{\text{Seg}_r(\textnormal{\textbf{B}})} \right)\end{array}\]
where $\overline{\text{Seg}_r(\textnormal{\textbf{B}})} \subset \text{Seg}(T)$ is the \textbf{closure} of $\text{Seg}_r(\textbf{B})$ (i.e., the smallest set of segments containing $\text{Seg}_r(\textrm{\textbf{B}})$ such that $s = (v_0, v_1, \ldots, v_k),$ $t = (v_k, v_{k+1} \ldots, v_\ell) \in \text{Seg}_r(\textrm{\textbf{B}})$ and ${s\circ t} \in \text{Seg}(T)$ implies $s\circ t \in \overline{\text{Seg}_r(\textnormal{\textbf{B}})}$). The category $\text{add}(M)$ where $M \in \text{mod}(\Lambda_T)$ is defined as the smallest full, additive subcategory of $\text{mod}(\Lambda_T)$ that contains $M$ and is closed under taking direct summands. We show in Figure~\ref{bij_wide_nctp} an example of this isomorphism.

\begin{figure}
$$\includegraphics[scale=0.7]{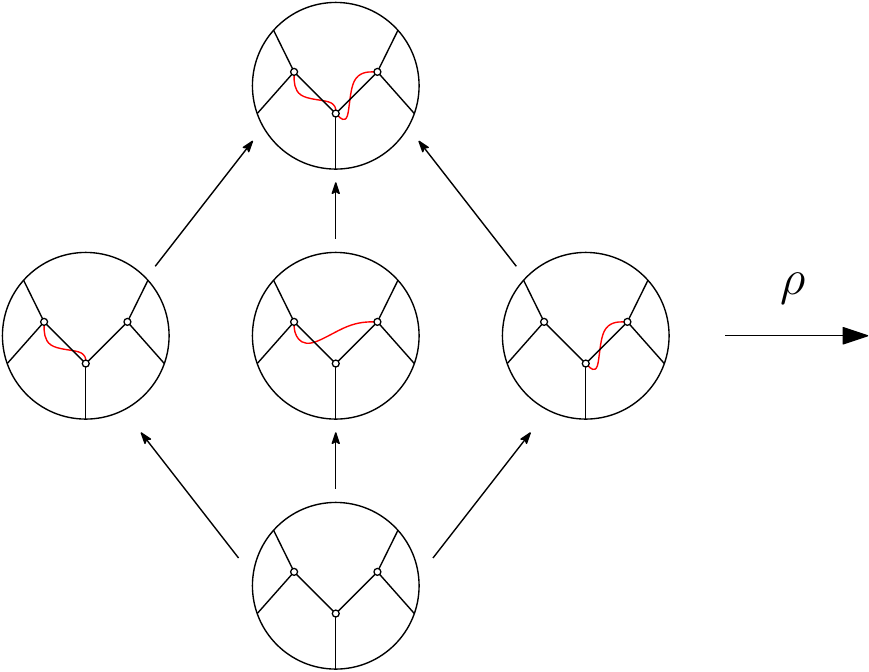}
\hspace{2mm} \includegraphics{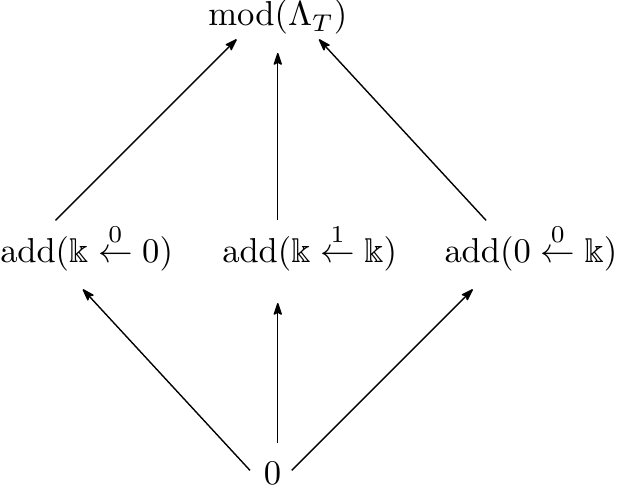}$$
\caption{The lattice of wide subcategories of mod$(\Lambda_T)$ and its corresponding lattice of noncrossing tree partitions, which we present using sets of red admissible curves.}
\label{bij_wide_nctp}
\end{figure}

Now fix a facet $\mathcal{F} \in \Delta^{NC}(T)$. For each $\delta \in \mathcal{F}\backslash\mathcal{F}^\partial$, let $\sigma$ be an admissible curve for $s_{\delta,\mathcal{F}}$ whose color is the same as the color of $\delta$. {We define $\psi_r(\mathcal{F})$ (resp., $\psi_g(\mathcal{F})$) to be the set partition of $V^\circ_T$ whose blocks consist of interior vertices connected by a sequence of these red (resp., green) admissible curves (see Figure~\ref{nc_to_ncp}).} The map $\psi_r$ is a bijection between the facets of $\Delta^{NC}(T)$ and $\text{NCP}(T)$. In fact, it induces a bijection $\text{Kr}: \text{NCP}(T) \to \text{NCP}(T)$ defined by $\text{Kr}(\psi_r(\mathcal{F})):= \psi_g(\mathcal{F})$. We refer to $\text{Kr}(\textbf{B})$ with $\textbf{B} \in \text{NCP}(T)$ as the \textbf{Kreweras complement} of \textbf{B}. We remark that using the map $\rho$, we can regard the Kreweras complementation map as a cyclic action on wide subcategories of $\text{mod}(\Lambda_T)$.

\begin{figure}[!htbp]
$$\includegraphics[scale=1.2]{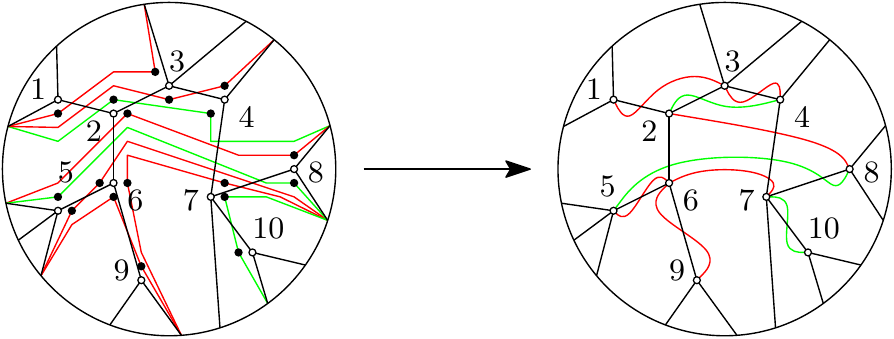}$$
\caption{On the left, we show the red and green arcs of a facet $\mathcal{F}$ of $\Delta^{NC}(T)$. On the right, we show the red-green tree $\mathcal{T}_\mathcal{F}.$ Here $\psi_r(\mathcal{F}) = \{\{1,3,4\}, \{2,8\}, \{5,6,7,9\}, \{10\}\}$ and $\text{Kr}(\psi_r(\mathcal{F})) = \psi_g(\mathcal{F}) = \{\{1\}, \{2,4\}, \{3\}, \{5,8\}, \{6\}, \{7,10\}, \{9\}\}$.}
\label{nc_to_ncp}
\end{figure}

Given $\textbf{B} \in \text{NCP}(T)$ corresponding to facet $\mathcal{F}$, choose a set $S_r$ (resp., $S_g$) of pairwise noncrossing red (resp., green) admissible curves realizing the segments in $\text{Seg}_r(\textbf{B})$ (resp., $\text{Seg}_g(\text{Kr}(\textbf{B}))$). By  {\cite[Theorem 5.11]{garver2016oriented}}, the sets $S_r$ and $S_g$ define a new tree $\mathcal{T}_\mathcal{F}$ whose vertex set is $V^\circ_T$ and whose edge set is $S_r \sqcup S_g$. We refer to $\mathcal{T}_\mathcal{F}$ as the \textbf{red-green tree} of $\mathcal{F}$.

Given a red-green tree $\mathcal{T}_\mathcal{F}$ and an acyclic path $s = (v_0,\ldots, v_t)$ of $T$ where $v_0, v_t \in V^\circ_T$, it will be useful to define $\varsigma(s) = (\sigma_{i_1},\ldots, \sigma_{i_\ell})$ to be the unique shortest sequence of admissible curves in $\mathcal{T}_\mathcal{F}$ joining $v_0$ and $v_t$ where $\sigma_{i_j}$ and $\sigma_{i_{j'}}$ with $j\neq j'$ share an endpoint if and only if $j' = j \pm 1$. By convention, we assume that $v_0$ is an endpoint of $\sigma_{i_1}$ and $v_t$ is an endpoint of $\sigma_{i_\ell}$. We will also need the following lemma for a result in the next section.

\begin{lemma}\label{lemma_forbidden_curves}
Let $\mathcal{T}_\mathcal{F}$ be a red-green tree, and let $s = (v_0,\ldots, v_t)$ be an acyclic path of $T$ where $v_0, v_t \in V^\circ_T$. Up to isotopy, there is no admissible curve $\xi$ nor $\xi'$ as in Figure~\ref{no_sub_no_quotient} that appears in the sequence $\varsigma(s)$.
\end{lemma}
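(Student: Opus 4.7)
The plan is to argue by contradiction: assume that some admissible curve of the forbidden shape $\xi$ (or dually $\xi'$) in Figure~\ref{no_sub_no_quotient} actually appears as one of the curves $\sigma_{i_j}$ in the minimal sequence $\varsigma(s)$, and then produce a strictly shorter sequence of admissible curves in $\mathcal{T}_\mathcal{F}$ joining $v_0$ to $v_t$, contradicting minimality. The two forbidden shapes are to be treated in parallel (one treats $\xi$ directly and $\xi'$ dually), so I would carry out the argument only for $\xi$.

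First I would fix notation. Let $\sigma_{i_{j-1}}$ and $\sigma_{i_{j+1}}$ denote the admissible curves immediately before and after $\xi$ in $\varsigma(s)$, and let $x$ and $y$ be the interior vertices at which $\xi$ meets them respectively. The forbidden shape of $\xi$ means that, as one follows $\xi$ between its endpoints, it is locally positioned relative to $s$ in a way that forces a specific interior vertex $z \in V^\circ_T$ of the segment underlying $\xi$ to lie in a prescribed region determined by the combined trajectory of $\sigma_{i_{j-1}}, \xi, \sigma_{i_{j+1}}$. My first substantive step is to read off this vertex $z$ from the figure and identify the corner(s) at which one could anchor a competing admissible curve emanating from $z$.

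Next I would invoke the structural results available: by \cite[Theorem 5.11]{garver2016oriented}, $\mathcal{T}_\mathcal{F}$ is a tree on $V^\circ_T$ whose edges are the noncrossing red/green admissible curves in $S_r \sqcup S_g$, and the blocks of $\psi_r(\mathcal{F})$ and $\psi_g(\mathcal{F})$ are segment-connected via admissible curves of a single color. Since $z$ lies in the same block as $x$ (and as $y$) with respect to the color of $\xi$, segment-connectedness supplies an admissible curve $\tau$ in $\mathcal{T}_\mathcal{F}$ of the same color as $\xi$ having $z$ as an endpoint. The noncrossing property of $S_r \sqcup S_g$ forces $\tau$ to share endpoints with either $\sigma_{i_{j-1}}$ or $\sigma_{i_{j+1}}$ (rather than crossing $\xi$), so that splicing $\tau$ into $\varsigma(s)$ in place of $\xi$ (and possibly of one of its neighbors) yields a sequence of admissible curves joining $v_0$ to $v_t$ that is strictly shorter than $\varsigma(s)$, the desired contradiction.

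The principal obstacle I anticipate is the case analysis. The figure presumably presents two distinct forbidden shapes ($\xi$ and $\xi'$), and each can appear either at an interior position of $\varsigma(s)$ or adjacent to one of the endpoints $v_0$, $v_t$; in the latter case the neighbor $\sigma_{i_{j-1}}$ or $\sigma_{i_{j+1}}$ is absent and one has to argue directly from the endpoint. In every subcase the delicate point is to verify that the candidate shortcut $\tau$ genuinely lies in $S_r \sqcup S_g$ (rather than merely in the set of abstract admissible curves in $D^2$), which is ultimately what segment-connectedness of the block containing $z$ is responsible for establishing.
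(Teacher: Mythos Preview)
Your proposal has a genuine gap at its central step. You assert that ``$z$ lies in the same block as $x$ (and as $y$) with respect to the color of $\xi$,'' but this is unjustified and in general false. The curve $\xi$ is an edge of the red-green tree $\mathcal{T}_\mathcal{F}$, so it represents an inclusion-minimal segment in $\text{Seg}_r(\textbf{B})$ or $\text{Seg}_g(\text{Kr}(\textbf{B}))$; an interior vertex $z$ of that segment typically lies in a \emph{different} block of the corresponding partition, and there is no mechanism here forcing it to coincide with the block of $x$ or $y$. Consequently there is no reason a curve $\tau \in S_r \sqcup S_g$ with endpoint $z$ should exist with the properties you need, and the subsequent claim that $\tau$ must share an endpoint with $\sigma_{i_{j-1}}$ or $\sigma_{i_{j+1}}$ is equally unsupported. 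Note also that since $\mathcal{T}_\mathcal{F}$ is a tree, $\varsigma(s)$ is not merely minimal but the \emph{unique} path from $v_0$ to $v_t$; producing an alternative would yield a cycle, not a shorter path, so the contradiction would have to be framed that way.

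There is also a structural misreading: the forbidden configurations $\xi$ and $\xi'$ in the figure are anchored at $v_0$ and $v_t$ respectively, so if such a curve occurs in $\varsigma(s)$ it is necessarily $\sigma_{i_1}$ (for $\xi$) or $\sigma_{i_\ell}$ (for $\xi'$), never an interior curve with neighbors on both sides. The paper's proof exploits this: it sets $\xi = \sigma_{i_1}$ and then argues geometrically that the noncrossing constraint on successive curves $\sigma_{i_2}, \sigma_{i_3}, \ldots$ forces the sequence to drift away from $v_t$ and never return, first under the simplifying assumption that colors alternate and then by grouping consecutive same-color curves in the general case. Your shortcut-in-the-tree idea does not engage with this obstruction.
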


\begin{proof}
Let $(s_{i_1}, \ldots, s_{i_\ell})$ denote the sequence of segments of $T$ where for each $j \in \{1,\ldots, \ell\}$ the segment $s_{i_j}$ is represented by the admissible curve $\sigma_{i_j}$.

Suppose there exists a curve in the sequence $\varsigma(s) = (\sigma_{i_1},\ldots, \sigma_{i_\ell})$ of the form $\xi$ or $\xi'$ as in Figure~\ref{no_sub_no_quotient}. Without loss of generality, we assume the former. The curve $\xi$ must be equal to $\sigma_{i_1}$. We show that $\sigma_{i_1}$ cannot equal $\xi$ as in Figure~\ref{no_sub_no_quotient} $(iii)$ under the assumption that $\sigma_{i_1}$ is green. The proofs in the other cases are analogous.

\begin{figure}[!htbp]
$$\begin{array}{cccccccc}\includegraphics[scale=1.75]{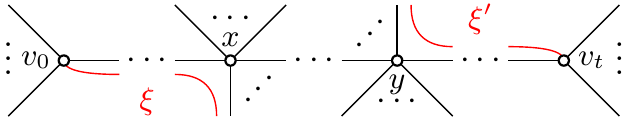} \\ (i) \\ \includegraphics[scale=1.75]{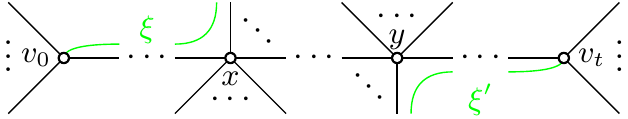}\\ (ii) \\
\includegraphics[scale=1.75]{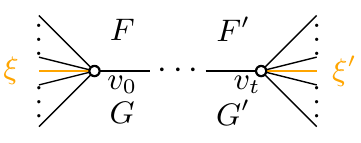} \\ (iii)
\end{array}$$
\caption{Several types of curves that cannot appear in the sequence $\varsigma(s)$. In $(i)$ and $(ii)$, $x$ (resp., $y$) is a vertex of the acyclic path $[v_0,v_t]$ other than $v_0$ (resp., $v_t$). In $(i)$ and $(ii)$, the curves $\xi$ and $\xi^\prime$ are not allowed to have both endpoints be vertices of $[v_0,v_t]$. In $(iii)$, the curves $\xi$ and $\xi^\prime$ are drawn in brown because they could be either red or green. Also in $(iii)$, the curve $\xi$ (resp., $\xi^\prime$) is not allowed to have an endpoint of the form $z(v_0,F)$ or $z(v_0,G)$ (resp., $z(v_t,F^\prime)$ or $z(v_t,F^\prime)$).}
\label{no_sub_no_quotient}
\end{figure}


We first assume that the colors of the curves in the sequence $\varsigma(s)$ are alternating. If $s_{i_1}$ and $s_{i_2}$ have a common subsegment, then the curve $\sigma_{i_2}$ appears in the configuration shown in Figure~\ref{first_case_fig} $(a)$ up to isotopy. If $s_{i_1}$ and $s_{i_2}$ have no common subsegment, then the curve $\sigma_{i_2}$ appears in the configuration shown in Figure~\ref{first_case_fig} $(b)$ up to isotopy.


\begin{figure}[!htbp]
$$\begin{array}{ccccccccccc}
\includegraphics[scale=1.65]{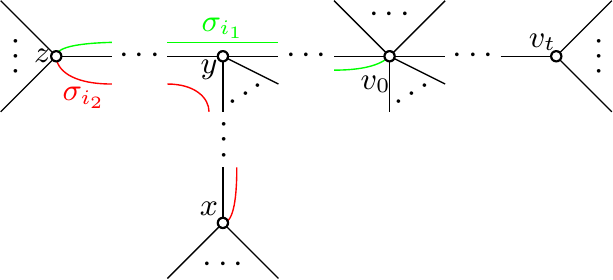} \\ (a) \\ \includegraphics[scale=1.65]{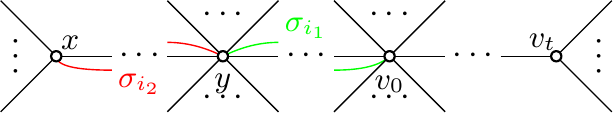}\\ (b)\\
\end{array}$$
\caption{In $(a)$, we show the case when $s_{i_1} = [v_0,z]$ and $s_{i_2} = [z,x]$ have a common subsegment. The segment $[z,y]$ is the unique longest segment contained in both $s_{i_1}$ and $s_{i_2}$. In $(a)$, we allow for the case where either $y=x$ or $y = v_0$. The case where $x = v_0$ cannot happen because $s_{i_1} \neq s_{i_2}$. In $(b)$, we show the case when $s_{i_1}= [v_0,y]$ and $s_{i_2}= [y,x]$ have no common subsegment. In $(b)$, vertex $y$ is the unique common vertex of $s_{i_1}$ and $s_{i_2}$.}
\label{first_case_fig}
\end{figure}


Suppose that $s_{i_1}$ and $s_{i_2}$ have a common subsegment $[z, y]$ in the notation of Figure~\ref{first_case_fig} $(a)$. If vertex $y$ or vertex $v_0$ has degree greater than 3, then the acyclic path $[x,v_t]$ is not a segment of $T$. This implies that there does not exist an admissible curve connecting $x$ and $v_t$. On the other hand, assume that vertex $y$ and vertex $v_0$ both have degree equal to 3. In this case, even though $[x,v_t]$ is now a segment, any admissible curve connecting $x$ and $v_t$ must cross $\sigma_{i_2}$ (see Figure~\ref{first_case_fig} $(a)$). 

We reduce to considering the behavior of the green admissible curve $\sigma_{i_3}$. A similar analysis shows that the common endpoint of $s_{i_3}$ and $s_{i_4}$ cannot be connected to $v_t$ by an admissible curve that is pairwise noncrossing with the curves appearing in $\varsigma(s)$. By continuing this process, we see that the sequence of curves $\varsigma(s)$ cannot return to vertex $v_t$ in such a way that the curves are pairwise noncrossing. An analogous argument reaches the same contradiction when $s_{i_1}$ and $s_{i_2}$ have no common subsegment. 

We now explain the general case where the colors of the curves in the sequence $\varsigma(s)$ are not necessarily alternating. Let $\sigma_{i_1}, \ldots, \sigma_{i_j}$ (resp., $\sigma_{i_{j+1}}, \ldots, \sigma_{i_{k}}$) denote the first $j$ green (resp., first $k$ red) admissible curves in the sequence $\varsigma(s)$. By the defining properties of noncrossing tree partitions, no two segments in the sequence $s_{i_1}, \ldots, s_{i_j}$ have a common subsegment, and the same is true of the segments in the sequence $s_{i_{j+1}}, \ldots, s_{i_k}$. This means that $s_{i_1}\circ \cdots \circ s_{i_j}$ and $s_{i_{j+1}}\circ \cdots \circ s_{i_k}$ are well-defined acyclic paths of $T$.

The acyclic paths $s_{i_1}\circ \cdots \circ s_{i_j}$ and $s_{i_{j+1}}\circ \cdots \circ s_{i_k}$ may or may not have a common acyclic subpath. The curves $\sigma_{i_1}, \ldots, \sigma_{i_j}$ and $\sigma_{i_{j+1}}, \ldots, \sigma_{i_{k}}$ therefore appear in two possible configurations that are analogous to the two configurations in Figure~\ref{first_case_fig}. One can thus adapt the argument we have given in the case where the colors of the curves in $\varsigma(s)$ are alternating to the general case.
\end{proof}

\section{Red-green trees as torsion pairs}\label{Section_red-green_torsion_pair}

Given a facet $\mathcal{F} \in \Delta^{NC}(T)$, its red-green tree $\mathcal{T}_\mathcal{F}$ turns out to be equivalent to the data of a torsion pair inside $\text{mod}(\Lambda_T)$. This realization as a torsion pair will allow us to evaluate $\theta_\mathcal{F}$ on any indecomposable $\Lambda_T$-module. Recall that a pair of full additive subcategories $(\mathscr{T},\mathscr{F})$ of an abelian category $\mathcal{A}$ is called a \textbf{torsion pair} if the following hold:
\begin{itemize}
\item[i)] $\text{Hom}_\mathcal{A}(T, F) = 0$ for any $T \in \mathscr{T}$ and any $F \in \mathscr{F}$,
\item[ii)] for any $X \in \mathcal{A}$ there is a short exact sequence $$0 \to T \to X \to F \to 0$$ with $T \in \mathscr{T}$ and $F\in\mathscr{F}$.
\end{itemize}
The category $\mathscr{T}$ (resp., $\mathscr{F}$) is called a \textbf{torsion class} (resp., a \textbf{torsion-free class}).

Before stating our result, we recall the definition of the lattice of biclosed sets of segments of a tree as well as some of its lattice properties. We say a subset $B \subset \text{Seg}(T)$ is \textbf{closed} if for any segments $s, t \in B$ with $s\circ t \in \text{Seg}(T)$, one has that $s\circ t \in B.$ If both $B$ and $\text{Seg}(T)\backslash B$ are closed, we say $B$ is \textbf{biclosed}. We denote the poset of biclosed subsets of $\text{Seg}(T)$ ordered by inclusion by $\text{Bic}(T)$. By \cite[Theorem 4.1]{garver2016oriented}, this poset is a lattice, and the join of two biclosed sets $B_1, B_2 \in \text{Bic}(T)$ is given by $B_1 \vee B_2 = \overline{B_1\cup B_2}$. It is easy to show that any element $s \in B_1\vee B_2$ may be written as $s = t_1\circ \cdots \circ t_k$ for some $t_1, \ldots, t_k \in B_1\cup B_2$. It follows from \cite[Lemma 4.6]{garver2016oriented} that for any segment $s \in \text{Seg}(T)$ one has that $C_s, K_s \in \text{Bic}(T)$.

\begin{proposition}
Let $\textbf{B} \in \text{NCP}(T)$ be a noncrossing tree partition of a tree $T$. Then the categories
$$\mathscr{T}_\textbf{B} := \text{add}\left(\bigoplus_t M(w(t)): t \in \bigvee_{s \in \text{Seg}_g(\text{Kr}(\textbf{B}))}K_s\right), \ \ \mathscr{F}_\textbf{B}:= \text{add}\left(\bigoplus_{t} M(w(t)): t \in \bigvee_{s \in \text{Seg}_r(\textbf{B})} C_s\right)$$ define the torsion pair $(\mathscr{T}_\textbf{B},\mathscr{F}_\textbf{B})$ in $\text{mod}(\Lambda_T)$.\end{proposition}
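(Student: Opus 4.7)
My plan is to translate the proposition into segment-theoretic combinatorics and then verify each of the two torsion-pair axioms using Lemma~\ref{lemma_forbidden_curves} as the key geometric input.

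First, I reformulate the two subcategories combinatorially. By Lemma~\ref{lem2} and Remark~\ref{remark_Ks}, an indecomposable $M(w(t))$ lies in $\mathscr{T}_\textbf{B}$ (respectively, $\mathscr{F}_\textbf{B}$) if and only if $t$ belongs to the biclosed set $\mathcal{K} := \bigvee_{s \in \text{Seg}_g(\text{Kr}(\textbf{B}))} K_s$ (respectively, $\mathcal{C} := \bigvee_{s \in \text{Seg}_r(\textbf{B})} C_s$). Using the description of joins in $\text{Bic}(T)$ recalled immediately before the proposition, every $t \in \mathcal{K}$ decomposes as $t = t_1 \circ \cdots \circ t_k$ with each $t_i$ a quotient-segment of some green segment of $\text{Kr}(\textbf{B})$, and analogously for $\mathcal{C}$. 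Next, I will invoke the standard string-module Hom formula for gentle algebras: $\Hom_{\Lambda_T}(M(w(t)), M(w(t'))) \neq 0$ precisely when there is a segment $u$ that is both a quotient-segment of $t$ and a submodule-segment of $t'$.

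For axiom (i), I will suppose for contradiction that $\Hom_{\Lambda_T}(M(w(t)), M(w(t'))) \neq 0$ for some $t \in \mathcal{K}$ and $t' \in \mathcal{C}$, witnessed by a common segment $u$. Peeling off the composition factors of $t$ and $t'$ and using transitivity of the quotient-/submodule-segment relations, I extract a green segment $s_g \in \text{Seg}_g(\text{Kr}(\textbf{B}))$ with $u \in K_{s_g}$ and a red segment $s_r \in \text{Seg}_r(\textbf{B})$ with $u \in C_{s_r}$. Admissible curves for $s_g$ and $s_r$ are then two edges of $\mathcal{T}_\mathcal{F}$ meeting at an endpoint of $u$; translating the $K_{s_g}$ and $C_{s_r}$ conditions into turning data shows that they realize one of the forbidden configurations of Figure~\ref{no_sub_no_quotient} inside $\varsigma(u)$, contradicting Lemma~\ref{lemma_forbidden_curves}.

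For axiom (ii), it suffices to produce the short exact sequence on each indecomposable $X = M(w(s))$. I will consider the shortest sequence of admissible curves $\varsigma(s) = (\sigma_{i_1},\dots,\sigma_{i_\ell})$ in $\mathcal{T}_\mathcal{F}$ and the associated decomposition $s = s_{i_1}\circ\cdots\circ s_{i_\ell}$. Grouping the $\sigma_{i_j}$ into maximal monochromatic runs produces substrings of $w(s)$, and I will define $T \subseteq X$ as the submodule whose indecomposable summands are the string modules supported on the green runs and $F := X/T$ as the quotient whose summands are the red runs. The turning of $s$ at each color-transition vertex of $\varsigma(s)$ is prescribed by Lemma~\ref{lemma_forbidden_curves}: each green-run composite is a quotient-segment of the composed green segments (hence lies in $\mathcal{K}$ by biclosedness of the join) and dually each red-run composite lies in $\mathcal{C}$, so $T \in \mathscr{T}_\textbf{B}$ and $F \in \mathscr{F}_\textbf{B}$. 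The main obstacle I anticipate is in this last step when $\varsigma(s)$ alternates colors several times: the green-run submodules share transition vertices with the adjacent red-run quotients, so it is not a priori clear that the direct sum of the green substring modules really embeds as a submodule of $X$ or that the quotient splits cleanly into the predicted red pieces. Handling this will require a careful case analysis of the turning of $s$ at each transition vertex of $\varsigma(s)$, precisely the content of Lemma~\ref{lemma_forbidden_curves}, to ensure the substrings cut along the predicted break points and that neither $T$ nor $F$ accidentally picks up pieces from the wrong color run.
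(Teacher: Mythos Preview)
Your approach to axiom (ii) rests on a misreading of $\varsigma(s)$. The sequence $\varsigma(s) = (\sigma_{i_1}, \ldots, \sigma_{i_\ell})$ is the path in the red-green tree $\mathcal{T}_\mathcal{F}$ between the endpoints $v_0$ and $v_t$ of $s$; the corresponding segments $s_{i_j}$ lie in $\text{Seg}_r(\textbf{B}) \cup \text{Seg}_g(\text{Kr}(\textbf{B}))$, but there is no reason whatsoever that they compose to give $s$. A segment $s_{i_j}$ can leave $s$ entirely, pass through interior vertices of $T$ not on $s$, and return---indeed for a length-one segment $s = (v_0, v_1)$ one routinely has $\ell > 1$, which is precisely the situation Lemma~\ref{lemma_simples} is devoted to. So your ``associated decomposition $s = s_{i_1}\circ\cdots\circ s_{i_\ell}$'' does not exist, and the monochromatic-run construction of $T$ and $F$ never gets off the ground. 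The paper instead proves (ii) as the separate Lemma~\ref{lemma_ii}, by induction on the length of $s$: the base case for simples is Lemma~\ref{lemma_simples}, and the inductive step splits $s = s_1 \circ s_2$ at an interior vertex $v_i$ that is a genuine meeting point of two curves in $\varsigma(s)$ (supplied by Lemma~\ref{lemma_varsigma}), then glues the two shorter torsion sequences together via a four-case analysis.

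Your plan for axiom (i) also has gaps. From a witness $u \in K_t \cap C_{t'}$ you want to extract a single green $s_g$ with $u \in K_{s_g}$ and a single red $s_r$ with $u \in C_{s_r}$; but $u$ may straddle several composition factors $t_i$ of $t$ (and likewise for $t'$), so the transitivity you invoke does not apply. Even granting such $s_g, s_r$, nothing forces them to share an endpoint of $u$, nor to appear in the sequence $\varsigma(u)$, so Lemma~\ref{lemma_forbidden_curves} is not obviously applicable. The paper sidesteps all of this by quoting \cite[Lemma~8.10]{garver_oriented_rep_thy} for the Hom-vanishing between the generating green and red segments, and then lifting to quotients, submodules, and compositions by elementary Hom arguments.
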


\begin{proof}
We prove that $\text{Hom}_{\Lambda_T}(T,F)=0$ for any $T \in \mathscr{T}_\textbf{B}$ and any $F \in \mathscr{F}_\textbf{B}$. That for any $X \in \text{mod}(\Lambda_T)$ there exists the desired short exact sequence follows from Lemma~\ref{lemma_ii}.

By \cite[Lemma 8.10]{garver_oriented_rep_thy}, we know that $\text{Hom}_\Lambda(M(w(s)), M(w(t))) =0$ for any $s \in \text{Seg}_g(\text{Kr}(\textbf{B}))$ and any $t \in \text{Seg}_r(\textbf{B})$. Now given $s' \in K_s$ and $t' \in C_{t}$ where $s \in \text{Seg}_g(\text{Kr}(\textbf{B}))$ and $t \in \text{Seg}_r(\textbf{B})$, Lemma~\ref{lem2} and Remark~\ref{remark_Ks} imply that $M(w(s'))$ is a quotient of $M(w(s))$ and $M(w(t'))$ is a submodule of $M(w(t))$. We see that $\text{Hom}_\Lambda(M(w(s')),M(w(t'))) = 0,$ otherwise we have that $\text{Hom}_\Lambda(M(w(s)),M(w(t))) \neq 0$, a contradiction. 

Lastly, suppose that $s' = s'_1\circ \cdots \circ s'_k$ and $t' = t'_1\circ \cdots \circ t'_\ell$ where for each $i \in \{1,\ldots, k\}$ and each $j \in \{1,\ldots, \ell\}$ we have that $s'_i \in K_{s_i}$ for some $s_i \in \text{Seg}_g(\text{Kr}(\textbf{B}))$ and $t'_j \in C_{t_j}$ for some $t_j \in \text{Seg}_r(\textbf{B})$. One checks that there exists $i \in \{1,\ldots, k\}$ and $j \in \{1,\ldots, \ell\}$ such that $M(w(s'_i))$ is a submodule of $M(w(s'))$ and $M(w(t'_j))$ is a quotient of $M(w(t'))$. If $\text{Hom}_\Lambda(M(w(s')),M(w(t'))) \neq 0$, we obtain that $\text{Hom}_\Lambda(M(w(s'_i)), M(w(t'_j))) \neq 0$. However, this contradicts the previous paragraph. This completes the proof.
\end{proof}

\begin{lemma}\label{lemma_simples}
Given a noncrossing tree partition $\textbf{B} \in \text{NCP}(T)$, any simple module in $\text{mod}(\Lambda_T)$ belongs to $\mathscr{T}_\textbf{B}$ or $\mathscr{F}_\textbf{B}$.
\end{lemma}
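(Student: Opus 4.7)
The plan is to reduce the statement to a combinatorial condition on length-one segments and then verify it using the red-green tree of the facet corresponding to $\textbf{B}$.

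A simple $\Lambda_T$-module corresponds to a vertex of $Q_T$, equivalently to an interior edge $e = (v,u) \in \text{Int}(E_T)$, and the simple module is then $M(w(s_e))$ where $s_e = [v,u]$ has length one. Since the join in $\text{Bic}(T)$ satisfies $B_1 \vee B_2 = \overline{B_1 \cup B_2}$ and every element of the join decomposes as a concatenation of elements of $B_1 \cup B_2$, a length-one segment such as $s_e$ admits no nontrivial concatenation decomposition; hence $s_e$ lies in the join $\bigvee_s K_s$ if and only if $s_e \in K_s$ for a single indexing $s$, and similarly for $C_s$. Thus it suffices to show that for every interior edge $e = (v,u)$, either $s_e \in C_s$ for some $s \in \text{Seg}_r(\textbf{B})$ or $s_e \in K_s$ for some $s \in \text{Seg}_g(\text{Kr}(\textbf{B}))$.

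Let $\mathcal{F} \in \Delta^{NC}(T)$ be the facet with $\psi_r(\mathcal{F}) = \textbf{B}$ and $\mathcal{T}_\mathcal{F}$ its red-green tree; this is a spanning tree on $V^\circ_T$ whose edges are admissible curves realizing the segments in $\text{Seg}_r(\textbf{B}) \sqcup \text{Seg}_g(\text{Kr}(\textbf{B}))$. Since $v$ and $u$ are vertices of $\mathcal{T}_\mathcal{F}$, I consider the sequence $\varsigma(s_e) = (\sigma_{i_1}, \dots, \sigma_{i_\ell})$ of admissible curves joining them, with corresponding segments $s_{i_1}, \ldots, s_{i_\ell}$. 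Concatenating these segments produces a walk from $v$ to $u$ along interior edges of $T$, and since the unique such path is the single edge $e$, at least one $s_{i_j}$ contains $e$.

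When $\ell = 1$, the curve $\sigma_{i_1}$ has endpoints exactly $\{v, u\}$, so $s_{i_1} = s_e$; with no interior vertices to check, $s_e$ trivially satisfies $s_e \in C_{s_e}$ or $s_e \in K_{s_e}$ according to the color of $\sigma_{i_1}$. When $\ell \geq 2$, I examine the first curve $\sigma_{i_1}$ (which has $v$ as an endpoint) together with the last curve $\sigma_{i_\ell}$ (which has $u$ as an endpoint). Lemma~\ref{lemma_forbidden_curves} prohibits the configurations of Figure~\ref{no_sub_no_quotient} from $\varsigma(s_e)$, and these excluded configurations correspond precisely to scenarios in which some $s_{i_j}$ would contain $e$ with an incorrect turning at $v$ or $u$. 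Combining this with the alternation of curve colors about each interior vertex imposed by the red-green tree, I extract a curve $\sigma_{i_j}$ whose segment $s_{i_j}$ contains $e$ with the correct turning, yielding $e \in C_{s_{i_j}}$ if $\sigma_{i_j}$ is red, or $e \in K_{s_{i_j}}$ if $\sigma_{i_j}$ is green.

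The main obstacle is the case $\ell \geq 2$: one must translate the planar restrictions of Lemma~\ref{lemma_forbidden_curves} into the turning conditions of $C_s$ and $K_s$, and verify that these restrictions are sharp enough to force the correct turning in every scenario. Additional care is needed to handle cases where $v$ or $u$ is itself an endpoint of the selected segment $s_{i_j}$, since then the turning condition at that vertex becomes vacuous.
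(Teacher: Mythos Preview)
Your overall strategy matches the paper's: reduce to a length-one segment $s=(v_0,v_1)$, use the red-green tree $\mathcal{T}_\mathcal{F}$ and the path $\varsigma(s)=(\sigma_{i_1},\dots,\sigma_{i_\ell})$, handle $\ell=1$ directly, and for $\ell\ge 2$ locate some $\sigma_{i_j}$ whose segment contains $s$ with the correct turning, using Lemma~\ref{lemma_forbidden_curves}. However, the $\ell\ge 2$ case in your write-up is not a proof; it is a sketch with a substantive error and a missing argument.

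First, the assertion that there is an ``alternation of curve colors about each interior vertex imposed by the red-green tree'' is false. A block of $\textbf{B}$ with more than two vertices produces several red curves meeting at a common vertex (see, e.g., the vertex labelled $5$ in Figure~\ref{nc_to_ncp}, which is incident to three red curves and one green curve). The paper explicitly treats the non-alternating situation in the proof of Lemma~\ref{lemma_forbidden_curves}. You cannot use color alternation to pin down the color of the curve containing $s$.

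Second, even granting Lemma~\ref{lemma_forbidden_curves}, you have not shown how it forces the existence of a $\sigma_{i_j}$ of the \emph{required color} whose segment contains $s$ with the \emph{required turning} at both $v_0$ and $v_1$. The paper's argument is more delicate than what you wrote: it first checks whether $\sigma_{i_1}$ already works (e.g., $\sigma_{i_1}$ green and $M(w(s))$ a quotient of $M(w(s_{i_1}))$); if not, it takes the minimal $j$ with $s\subset s_{i_j}$ and then, via a three-way case analysis on how $\sigma_{i_1},\sigma_{i_{j-1}},\sigma_{i_j}$ sit relative to $s$, applies Lemma~\ref{lemma_forbidden_curves} to a carefully chosen sub-sequence $\varsigma([v_0,x])$ or $\varsigma([y,x])$ to rule out $\sigma_{i_j}$ having the wrong color. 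A further application of Lemma~\ref{lemma_forbidden_curves} to $\varsigma([x,v_1])$ is then needed to exclude the wrong turning at $v_1$. None of this structure is present in your proposal; your final paragraph in fact acknowledges that this translation ``is the main obstacle'' and is still to be done. As written, the proposal identifies the right ingredients but does not establish the lemma.
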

\begin{proof}
Let $M(w(s)) \in \text{mod}(\Lambda_T)$ be any simple module. Here we have $s = (v_0,v_1)$. Let $\varsigma(s) = (\sigma_{i_1},\ldots, \sigma_{i_\ell})$ denote the sequence of admissible curves in $\mathcal{T}_\mathcal{F}$ joining $v_0$ and $v_1$ defined in Section~\ref{Sec_noncrossing_tree_part}. Let $(s_{i_1},\ldots, s_{i_\ell})$ denote the corresponding sequence of segments.

If $\ell = 1$, then $s = s_{i_1}$ so $s  \in \text{Seg}_g(\text{Kr}(\textbf{B}))$ or $s \in \text{Seg}_r(\textbf{B})$. This means $M(w(s)) \in \mathscr{T}_\textbf{B}$ or $M(w(s)) \in \mathscr{F}_\textbf{B}.$ Thus we can assume that $\ell > 1.$ To complete the proof, by Lemma~\ref{lem2} and Remark~\ref{remark_Ks}, it is enough to show that $M(w(s))$ is a submodule of $M(w(s_{i_j})) \in \mathscr{F}_\textbf{B}$ for some $j \in \{1,\ldots, \ell\}$ or a quotient of $M(w(s_{i_j})) \in \mathscr{T}_\textbf{B}$ for some $j \in \{1,\ldots, \ell\}.$

If $\sigma_{i_1}$ is green (resp., red) and $M(w(s))$ is a quotient (resp., a submodule) of $M(w(s_{i_1}))$, then we are done. {Suppose that neither case holds. Then, we know that $s$ is not a subsegment of $s_{i_1}$.} 

\textbf{\textit{Case 1:} the curve $\sigma_{i_1}$ is green.}  Let $j$ be the smallest element of $\{1,\ldots, \ell\}$ such that $s_{i_j}$ contains $s$ as a subsegment. Assume for a contradiction that $\sigma_{i_j}$ is green, then the curves $\sigma_{i_1}$, $\sigma_{i_{j-1}}$, and $\sigma_{i_j}$ must appear in one of the configurations in Figure~\ref{gamma_i_j} up to isotopy.

\begin{figure}
$$\begin{array}{ccccccccc}
\includegraphics[scale=1.65]{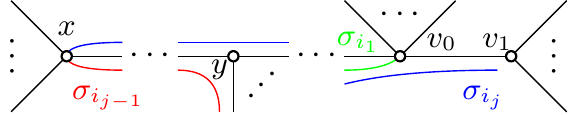}\\ (a) \\  \includegraphics[scale=1.65]{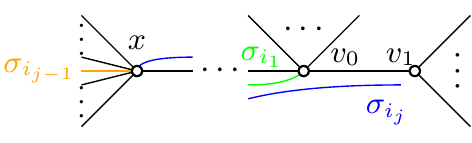} \\ (b)\\ \includegraphics[scale=1.65]{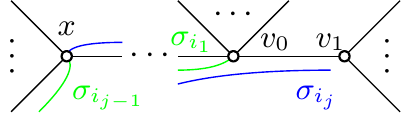} \\ (c)
\end{array}$$
\caption{{The three configurations from the proof of Lemma~\ref{lemma_simples}. In each of these configurations the curve $\sigma_{i_j}$ is a green admissible curve that is drawn in blue in order to distinguish it from $\sigma_{i_{1}}$ and $\sigma_{i_{j-1}}$. {The curve $\sigma_{i_j}$ may turn left or right at $v_1$.} In $(a)$, the vertex $y$ is a vertex of the acyclic path $[x, v_0]$ that is not equal to $x$. In $(b)$, the curve $\sigma_{i_{j-1}}$ is drawn in brown because it could be red or green.}}
\label{gamma_i_j}
\end{figure}



Observe that $[v_0,x]$ is an acyclic path of $T$ {with $v_0, x \in V^\circ_T$ and that $\varsigma([v_0,x]) = (\sigma_{i_1}, \ldots, \sigma_{i_{j-1}})$}. {If the curves $\sigma_{i_1}$, $\sigma_{i_{j-1}}$, and $\sigma_{i_j}$ belong to a configuration of the form shown in Figure~\ref{gamma_i_j} $(a)$ where $\sigma_{i_{j-1}}$ does not end at vertex $y$ or Figure~\ref{gamma_i_j} $(b)$, then the presence of $\sigma_{i_{j-1}}$ in the sequence $\varsigma([v_0,x])$ contradicts Lemma~\ref{lemma_forbidden_curves}, applied to $\varsigma([v_0,x])$.} 

{Next, suppose that the curves $\sigma_{i_1}$, $\sigma_{i_{j-1}}$, and $\sigma_{i_j}$ belong to a configuration of the form shown in Figure~\ref{gamma_i_j} $(a)$ where $\sigma_{i_{j-1}}$ ends at $y$. Since $s_{i_j}$ is the first segment in $(s_{i_1}, \ldots, s_{i_\ell})$ that contains $s$ as a subsegment, there exists a vertex $y^\prime \in V^\circ_T$ as in Figure~\ref{new_figure} $(a^\prime)$ and an index $k \in \{2,\ldots, j-2\}$ such that $\varsigma([y^\prime,x]) = (\sigma_{i_k}, \ldots, \sigma_{i_{j-1}})$. However, the presence of $\sigma_{i_{j-1}}$ in $\varsigma([y^\prime,x]) = (\sigma_{i_k},\ldots, \sigma_{i_{j-1}})$ contradicts Lemma~\ref{lemma_forbidden_curves}, applied to $\varsigma([y^\prime,x])$.} 

{Now suppose that the curves $\sigma_{i_1}$, $\sigma_{i_{j-1}}$, and $\sigma_{i_j}$ belong to a configuration of the form shown in Figure~\ref{gamma_i_j} $(c)$. Since $s_{i_j}$ is the first segment in $(s_{i_1}, \ldots, s_{i_\ell})$ that contains $s$ as a subsegment, there exists a vertex $y \in V^\circ_T$ as in Figure~\ref{new_figure} $(c^\prime)$ and an index $k \in \{2,\ldots, j-2\}$ such that $\varsigma([y,x]) = (\sigma_{i_k}, \ldots, \sigma_{i_{j-1}})$. However, the presence of $\sigma_{i_{j-1}}$ in $\varsigma([y,x]) = (\sigma_{i_k},\ldots, \sigma_{i_{j-1}})$ contradicts Lemma~\ref{lemma_forbidden_curves}, applied to $\varsigma([y,x])$.}


\begin{figure}
$$\begin{array}{ccccccccc} \includegraphics[scale=1.65]{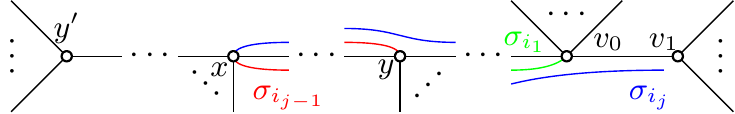} \\ (a^\prime) \\ \includegraphics[scale=1.65]{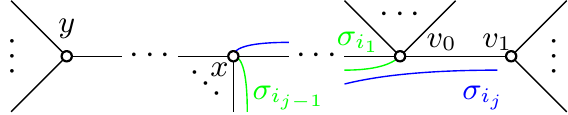} \\ (c^\prime) \end{array}$$
\caption{{In $(a^\prime)$, we show the vertex $y^\prime$ from the proof when $\sigma_{i_1}$, $\sigma_{i_{j-1}}$, and $\sigma_{i_j}$ belong to a configuration of the form shown in Figure~\ref{gamma_i_j} $(a)$ where $\sigma_{i_{j-1}}$ ends at vertex $y$. In $(c^\prime)$, we show the vertex $y$ from the proof when $\sigma_{i_1}$, $\sigma_{i_{j-1}}$, and $\sigma_{i_j}$ belong to a configuration of the form shown in Figure~\ref{gamma_i_j} $(c)$.}}
\label{new_figure}
\end{figure}


We obtain that $\sigma_{i_j}$ is red. {If $\sigma_{i_j}$ turns left at $v_1$ or if $v_1$ is an endpoint of $\sigma_{i_j}$, then} $M(w(s))$ is a submodule of $M(w(s_{i_j})) \in \mathscr{F}_\textbf{B}.$ {If $\sigma_{i_j}$ turns right at $v_1$, then it appears in the configuration in Figure~\ref{blah}. Notice that $\varsigma([x,v_1]) = (\sigma_{i_j}, \ldots, \sigma_{i_\ell})$. However, the presence of $\sigma_{i_j}$ in $\varsigma([x,v_1])$ contradicts Lemma~\ref{lemma_forbidden_curves}, applied to $\varsigma([x,v_1])$. Thus $\sigma_{i_j}$ turns left at $v_1$ or if $v_1$ is an endpoint of $\sigma_{i_j}$. Therefore, $M(w(s))$ is a submodule of $M(w(s_{i_j})) \in \mathscr{F}_\textbf{B}.$}

\begin{figure}
$$\begin{array}{ccccccccc} \includegraphics[scale=1.65]{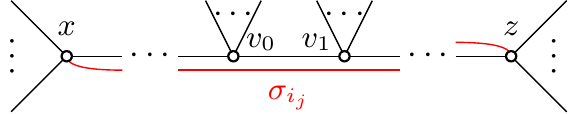} \end{array}$$
\caption{The curve $\sigma_{i_j}$ when it turns right at $v_1$.}
\label{blah}
\end{figure}

\textbf{\textit{Case 2:} the curve $\sigma_{i_1}$ is red.} As above, let $j$ be the smallest element of $\{1,\ldots, \ell\}$ such that $s_{i_j}$ contains $s$ as a subsegment. The analogous argument shows that $\sigma_{i_j}$ is green. Thus $M(w(s))$ is a quotient of $M(w(s_{i_j})) \in \mathscr{T}_\textbf{B}.$ 
\end{proof}

\begin{lemma}\label{lemma_ii}
Given a noncrossing tree partition $\textbf{B} \in \text{NCP}(T)$ and any $X \in \text{mod}(\Lambda_T)$, there exists a short exact sequence $0\to T \to X \to F \to 0$ where $T \in \mathscr{T}_\textbf{B}$ and $F \in \mathscr{F}_\textbf{B}$.
\end{lemma}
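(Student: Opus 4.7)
My plan is to establish the lemma by induction on $\ell(X) := \dim_\Bbbk X$. The base cases $\ell(X) = 0$ and $\ell(X) = 1$ are handled directly: the zero module is trivial, and if $X$ is simple then Lemma~\ref{lemma_simples} gives $X \in \mathscr{T}_\textbf{B}$ or $X \in \mathscr{F}_\textbf{B}$, so one of the trivial short exact sequences $0 \to X \to X \to 0$ or $0 \to 0 \to X \to X \to 0$ does the job.

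The crucial structural input for the inductive step is that $\mathscr{T}_\textbf{B}$ (and, dually, $\mathscr{F}_\textbf{B}$) is closed under extensions. I would establish this first. The indecomposables of $\mathscr{T}_\textbf{B}$ correspond to segments in $\bigvee_{s \in \text{Seg}_g(\text{Kr}(\textbf{B}))} K_s$, and the join is taken in the lattice $\text{Bic}(T)$, so the indexing set is closed under composition of segments. For tiling (hence gentle) algebras, the indecomposable summands of an extension of two string modules $M(w(s'))$ and $M(w(s''))$ are themselves string modules whose underlying segments arise as compositions or subsegments of $s'$ and $s''$. Since biclosed sets, described via the join of $K$-type (resp. $C$-type) ideals, are closed under these operations, extensions within $\mathscr{T}_\textbf{B}$ remain in $\mathscr{T}_\textbf{B}$. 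The same argument, with $K$ replaced by $C$ and using Lemma~\ref{lem2} in place of Remark~\ref{remark_Ks}, yields closure of $\mathscr{F}_\textbf{B}$ under extensions.

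For the inductive step, let $\ell(X) > 1$ and pick a simple submodule $S \subseteq X$. Apply the inductive hypothesis to $X/S$ to obtain a SES $0 \to T' \to X/S \to F' \to 0$ with $T' \in \mathscr{T}_\textbf{B}$ and $F' \in \mathscr{F}_\textbf{B}$. Let $\pi : X \twoheadrightarrow X/S$ and set $Y := \pi^{-1}(T')$; we then have short exact sequences $0 \to S \to Y \to T' \to 0$ and $0 \to Y \to X \to F' \to 0$. If $S \in \mathscr{T}_\textbf{B}$, extension closure of $\mathscr{T}_\textbf{B}$ gives $Y \in \mathscr{T}_\textbf{B}$, and the second sequence is exactly the one required. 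If instead $S \in \mathscr{F}_\textbf{B}$, I would run the dual argument: pick a simple quotient $X \twoheadrightarrow Q$, apply the inductive hypothesis to $\ker(X \to Q)$, and use extension closure of $\mathscr{F}_\textbf{B}$ when $Q \in \mathscr{F}_\textbf{B}$.

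The main obstacle is the residual case where every simple submodule of $X$ lies in $\mathscr{F}_\textbf{B}$ and every simple quotient lies in $\mathscr{T}_\textbf{B}$, so neither the direct nor the dual induction closes immediately. My proposed resolution is to upgrade Lemma~\ref{lemma_simples} from simples to arbitrary indecomposables: for any indecomposable summand $M(w(s))$ of $X$, the admissible-curve sequence $\varsigma(s)$ in the red-green tree $\mathcal{T}_\mathcal{F}$ must terminate on some segment $s_{i_j}$ that contains $s$ as a subsegment (resp. a quotient segment), and the configurational analysis carried out in the proof of Lemma~\ref{lemma_simples}, together with Lemma~\ref{lemma_forbidden_curves}, forces $M(w(s))$ to be a submodule of some element of $\mathscr{F}_\textbf{B}$ or a quotient of some element of $\mathscr{T}_\textbf{B}$, hence to lie in $\mathscr{T}_\textbf{B} \cup \mathscr{F}_\textbf{B}$ by Lemma~\ref{lem2} and Remark~\ref{remark_Ks}. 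Krull--Schmidt then yields a direct-sum decomposition $X = T \oplus F$ with $T \in \mathscr{T}_\textbf{B}$ and $F \in \mathscr{F}_\textbf{B}$, producing the desired split short exact sequence.
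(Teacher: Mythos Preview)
Your induction-on-length strategy is reasonable in spirit, but the resolution of the residual case contains a genuine error. You assert that the configurational analysis of Lemma~\ref{lemma_simples} upgrades to arbitrary indecomposables and forces every $M(w(s))$ into $\mathscr{T}_\textbf{B}\cup\mathscr{F}_\textbf{B}$. This is false already for the $A_2$ tiling algebra: take $T$ with three interior vertices $v_0,v_1,v_2$, let $\textbf{B}=\{\{v_0,v_1\},\{v_2\}\}$ so that $\mathscr{T}_\textbf{B}=\text{add}(M(w([v_1,v_2])))$ and $\mathscr{F}_\textbf{B}=\text{add}(M(w([v_0,v_1])))$; then $M(w([v_0,v_2]))$ lies in neither. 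The mechanism you invoke --- finding a curve $\sigma_{i_j}$ in $\varsigma(s)$ whose segment contains $s$ --- works only when $s$ has length one; for longer $s$ the sequence $\varsigma(s)$ typically decomposes $s$ into several pieces, none of which contains all of $s$. So there is no single generator $s_{i_j}\in\text{Seg}_r(\textbf{B})\cup\text{Seg}_g(\text{Kr}(\textbf{B}))$ with $s\in C_{s_{i_j}}$ or $s\in K_{s_{i_j}}$, and the appeal to Lemma~\ref{lem2} and Remark~\ref{remark_Ks} does not go through.

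This is precisely the difficulty the paper's proof is built to handle. Rather than trying to place an arbitrary indecomposable in $\mathscr{T}_\textbf{B}\cup\mathscr{F}_\textbf{B}$, the paper inducts on the length of the segment $s$, uses Lemma~\ref{lemma_varsigma} to locate an interior vertex $v_i$ of $s$ that is a common endpoint of two curves in $\varsigma(s)$, writes $s=s_1\circ s_2$ at that vertex, and applies the inductive hypothesis to the two shorter segments. The resulting pair of short exact sequences is then explicitly spliced into a single one for $M(w(s))$, with four cases according to whether the two ``boundary'' summands $M(w(s_1^*))$ and $M(w(s_2^*))$ sit as submodules or quotients; in two of the cases their composite $s_1^*\circ s_2^*$ must be formed, and closedness of the biclosed set $\bigvee K_s$ (resp.\ $\bigvee C_s$) is what guarantees the spliced term lands in $\mathscr{T}_\textbf{B}$ (resp.\ $\mathscr{F}_\textbf{B}$). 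Your extension-closure sketch gestures at this closedness but does not supply the splicing, which is the actual content of the inductive step.
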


\begin{proof}
It is enough to prove the result for indecomposable $\Lambda_T$-modules. Given an indecomposable module $M(w(s)) \in \text{mod}(\Lambda_T)$, we prove the result by induction on the length of $s$.

First, suppose that $s = (v_0,v_1)$ is a minimal length segment. By Lemma~\ref{lemma_simples}, the simple module $M(w(s))$ belongs to $\mathscr{T}_\textbf{B}$ or $\mathscr{F}_\textbf{B}$. In the former case, the desired short exact sequence is given by $0 \to M(w(s)) \stackrel{1}{\to} M(w(s)) \to 0 \to 0$ where $1$ denotes the identity map on $M(w(s))$. In the latter case, the desired short exact sequence is given by $0 \to 0 \to M(w(s)) \stackrel{1}{\to} M(w(s)) \to 0$ where $1$ denotes the identity map on $M(w(s))$.

Next, suppose that $s = (v_0, \ldots, v_t) \in \text{Seg}(T)$ where $t > 1$ and that the result holds for all shorter segments. Consider the sequence $\varsigma(s) = (\sigma_{i_1}, \ldots, \sigma_{i_\ell})$, and let $(s_{i_1}, \ldots, s_{i_\ell})$ denote the corresponding sequence of segments. If $\ell = 1$, then $s = s_{i_1}$ so $M(w(s)) \in \mathscr{T}_\textbf{B}$ or $M(w(s)) \in \mathscr{F}_\textbf{B}$, by the definition of $\mathscr{T}_\textbf{B}$ and $\mathscr{F}_\textbf{B}$. In the former case, we obtain the short exact sequence $0 \to M(w(s)) \stackrel{1}{\to} M(w(s)) \to 0 \to 0$. In the latter case, we obtain the short exact sequence $0 \to 0 \to M(w(s)) \stackrel{1}{\to} M(w(s)) \to 0$. 

Now, assume that $\ell = 2$ and that the unique common endpoint of the two curves in $\varsigma(s)$ is not a vertex of $s$. This implies that $s_{i_1}$ or $s_{i_2}$ contains $s$ as a subsegment. Suppose without loss of generality that $s_{i_1}$ contains $s$ as a subsegment. Then, by Lemma~\ref{lemma_forbidden_curves}, we know that $M(w(s))$ is a quotient of $M(w(s_{i_1})) \in \mathscr{T}_\textbf{B}$ if $\sigma_{i_1}$ is green, and $M(w(s))$ is a submodule of $M(w(s_{i_1})) \in \mathscr{F}_\textbf{B}$ if $\sigma_{i_1}$ is red. We obtain that $M(w(s)) \in \mathscr{T}_\textbf{B}$ or $M(w(s)) \in \mathscr{F}_\textbf{B}$. In the former case, we obtain the short exact sequence $0 \to M(w(s)) \stackrel{1}{\to} M(w(s)) \to 0 \to 0$. In the latter case, we obtain the short exact sequence $0 \to 0 \to M(w(s)) \stackrel{1}{\to} M(w(s)) \to 0$.

Lastly, we construct the desired short exact sequence when $\ell \ge 2$ and when $\ell = 2$ we assume that two curves in the sequence $\varsigma(s)$ have a common endpoint that is a vertex of $s$ other than one of its endpoints. When $\ell = 2,$ the definition of $\varsigma(s)$ implies that if $\sigma_{i_1}$ and $\sigma_{i_2}$ have a common vertex that is a vertex of $s$, it cannot be an endpoint of $s$. Note also that when $\ell > 2$, by Lemma~\ref{lemma_varsigma}, there exists two curves in the sequence $\varsigma(s)$ that have a common endpoint that is a vertex of $s$ other than one of its endpoints. In each case, let $v_i$ with $i \neq 0, t$ denote this vertex.

Now write $s= s_1\circ s_2$ where $s_1 = (v_0, \ldots, v_i)$ and $s_2 = (v_i,\ldots, v_t)$. Without loss of generality, we assume that $M(w(s_1))$ is a submodule of $M(w(s))$ and $M(w(s_2))$ is a quotient of $M(w(s))$. By induction, there exists the following short exact sequences $$0 \longrightarrow X \stackrel{f}{\longrightarrow} M(w(s_1)) \stackrel{g}{\longrightarrow} Y \longrightarrow 0,$$
$$0 \longrightarrow X' \stackrel{f'}{\longrightarrow} M(w(s_2)) \stackrel{g'}{\longrightarrow} Y' \longrightarrow 0$$ where $X:=\bigoplus_j M(w(s_{1,j})), X':=\bigoplus_{j'} M(w(s_{2,{j'}})) \in \mathscr{T}_\textbf{B}$ and $Y:=\bigoplus_k M(w(t_{1,k})), Y':=\bigoplus_{k'} M(w(t_{2,{k'}}))\in \mathscr{F}_\textbf{B}$. No two  distinct segments belonging to the set $\{s_{1,j},s_{2,j'},t_{1,k}, t_{2,k'}\}_{j,j',k,k'}$ contain a common edge of $T$. However, there exist exactly two segments $s_1^* \in \{s_{1,j},t_{1,k}\}_{j,k}$ and $s_2^* \in \{s_{2,j'},t_{2,k'}\}_{j',k'}$ such that $s_1^*\circ s_2^* \in \text{Seg}(T)$ whose unique common vertex is $v_i$. Observe that $s_1^*$ is a subsegment of $s_1$ and $s_2^*$ is a subsegment of $s_2$.   

We construct the desired short exact sequence in each of the following four cases:
\begin{itemize}
\item[(a)] $M(w(s^*_1))$ is a submodule of $M(w(s_1)),$ $M(w(s^*_2))$ is a quotient of $M(w(s_2))$;
\item[(b)] $M(w(s^*_1))$ is a quotient of $M(w(s_1))$, $M(w(s^*_2))$ is a submodule of $M(w(s_2))$;
\item[(c)] $M(w(s^*_1))$ is a submodule of $M(w(s_1)),$ $M(w(s^*_2))$ is a submodule of $M(w(s_2))$;
\item[(d)] $M(w(s^*_1))$ is a quotient of $M(w(s_1)),$ $M(w(s^*_2))$ is a quotient of $M(w(s_2))$.
\end{itemize}

\textbf{\textit{Case (a):}} By assumption, we obtain that $M(w(s_1^*))$ is a submodule of $M(w(s))$ and $M(w(s_2^*))$ is a quotient of $M(w(s))$. Thus $$0 \longrightarrow X\oplus X' \stackrel{f\oplus f'}{\longrightarrow} M(w(s)) \stackrel{g\oplus g'}{\longrightarrow} Y\oplus Y' \longrightarrow 0$$ is a short exact sequence of $\Lambda_T$-modules with $X\oplus X' \in \mathscr{T}_\textbf{B}$ and $Y\oplus Y' \in \mathscr{F}_\textbf{B}$.

\textbf{\textit{Case (b):}} The proof is analogous to that of (a), and it produces the same short exact sequence.

\textbf{\textit{Case (c):}} By assumption, $M(w(s_1^*\circ s_2^*))$ is a submodule of $M(w(s))$. Let $i:M(w(s_1^*\circ s_2^*)) \hookrightarrow M(w(s))$ denote the canonical inclusion. Note that $M(w(s^*_1)), M(w(s^*_2)) \in \mathscr{T}_\textbf{B}$ and $s^*_1\circ s_2^* \in \bigvee_{s\in \text{Seg}_r(\textbf{B})}C_s$ since $\bigvee_{s\in \text{Seg}_r(\textbf{B})}C_s$ is closed. This implies that $M(w(s_1^*\circ s_2^*)) \in \mathscr{T}_\textbf{B}$. Thus $$0 \longrightarrow X/M(w(s_1^*))\oplus X'/M(w(s_2^*))\oplus M(w(s_1^*\circ s_2^*)) \stackrel{\overline{f}\oplus \overline{f'}\oplus i}{\longrightarrow} M(w(s)) \stackrel{g\oplus g'}{\longrightarrow} Y\oplus Y' \longrightarrow 0$$ is a short exact sequence of $\Lambda_T$-modules with $X/M(w(s_1^*))\oplus X'/M(w(s_2^*))\oplus M(w(s_1^*\circ s_2^*)) \in \mathscr{T}_\textbf{B}$ and $Y \oplus Y' \in \mathscr{F}_\textbf{B}$. Here, $\overline{f}$ (resp., $\overline{f'}$) is the induced map $\overline{f}: X/M(w(s_1^*)) \to M(w(s_1))$ (resp., $\overline{f'}:X'/M(w(s_2^*)) \to M(w(s_2))$).

\textbf{\textit{Case (d):}} The proof is analogous to that of (c). The argument produces the following short exact sequence $$0 \longrightarrow X\oplus X' \stackrel{f\oplus f'}{\longrightarrow} M(w(s)) \stackrel{\overline{g}\oplus \overline{g'} \oplus h}{\longrightarrow} Y/M(w(s_1^*))\oplus Y'/M(w(s_2^*))\oplus M(w(s_1^*\circ s_2^*))\longrightarrow 0$$ with $X\oplus X' \in \mathscr{T}_\textbf{B}$ and $Y/M(w(s_1^*))\oplus Y'/M(w(s_2^*))\oplus M(w(s_1^*\circ s_2^*)) \in \mathscr{F}_\textbf{B}$. Here $h: M(w(s)) \twoheadrightarrow M(w(s_1^*\circ s_2^*)) $ is the canonical surjection.\end{proof}

\begin{lemma}\label{lemma_varsigma}
Let $s = (v_0,\ldots, v_t) \in \text{Seg}(T)$ with $t \ge 2$, and let $\varsigma(s) = (\sigma_{i_1},\ldots, \sigma_{i_\ell})$ denote the sequence of admissible curves as defined in Section~\ref{Sec_noncrossing_tree_part}. If $\ell > 2$, there exists a vertex $v_i$ of $s$ with $i \in \{1,\ldots, t-1\}$ that is the common endpoint of two curves in $\varsigma(s)$.
\end{lemma}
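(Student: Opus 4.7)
The plan is to argue by contradiction. Suppose $\ell > 2$ and that no common endpoint $x_j$ (the vertex shared by $\sigma_{i_j}$ and $\sigma_{i_{j+1}}$, for $j \in \{1, \ldots, \ell-1\}$) equals any $v_i$ with $i \in \{1, \ldots, t-1\}$. Since $\mathcal{T}_\mathcal{F}$ is a tree and $\varsigma(s)$ is the unique simple path from $v_0$ to $v_t$ in it, the vertices $x_1, \ldots, x_{\ell-1}$ are automatically distinct and lie in $V^\circ_T \setminus \{v_0, v_t\}$; by hypothesis they further avoid $\{v_1, \ldots, v_{t-1}\}$.

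First, I would exploit the fact that $T$ is a tree: the walk $s_{i_1} \cdot s_{i_2} \cdots s_{i_\ell}$ from $v_0$ to $v_t$ in $T$ reduces, after canceling backtracks, to the unique simple path $s$. Consequently $s \subseteq \bigcup_j s_{i_j}$, and since no $v_i$ with $0 < i < t$ coincides with any $x_j$, each such $v_i$ must appear as a non-endpoint (interior) vertex of some segment $s_{i_{j(i)}}$. Writing $s_{i_1} = [v_0, v_k] \circ [v_k, x_1]$ where $v_k$ is the last vertex of $s$ along $s_{i_1}$ (so $[v_k, x_1]$ meets $s$ only at $v_k$), and symmetrically $s_{i_\ell} = [x_{\ell-1}, v_m] \circ [v_m, v_t]$, the intermediate segments $s_{i_2}, \ldots, s_{i_{\ell-1}}$ must collectively traverse the portion $[v_k, v_m]$ of $s$ through their interior vertices.

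The main obstacle, and the heart of the argument, is to identify a curve in $\varsigma(s)$ that realizes a forbidden configuration from Figure~\ref{no_sub_no_quotient}. Since $\ell > 2$, there exists an intermediate curve $\sigma_{i_j}$ with $1 < j < \ell$, both of whose endpoints $x_{j-1}, x_j$ lie off $s$; in particular, $\sigma_{i_j}$ has no endpoint at $v_0$ or at $v_t$. I would then show, via a case analysis on how $s_{i_j}$ meets $s$ (entirely off $s$ versus passing through some $v_i$ as an interior vertex), that the pairwise noncrossing condition in $\mathcal{T}_\mathcal{F}$ together with the requirement that the walk $s_{i_1} \cdots s_{i_\ell}$ cover $s$ forces $\sigma_{i_j}$ into configuration $(iii)$ of Figure~\ref{no_sub_no_quotient}. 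This contradicts Lemma~\ref{lemma_forbidden_curves}, yielding the desired conclusion.

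The hardest step will be the case where $s_{i_j}$ passes through some $v_i \in s$ as an interior vertex: one must carefully use the structure of admissible curves near $v_i$ and the fact that consecutive curves share an endpoint either by diverging at a common vertex or by overlapping on a common subsegment (as in the cases $(a)$ and $(b)$ from the proof of Lemma~\ref{lemma_forbidden_curves}) to rule out all possibilities and produce the forbidden configuration.
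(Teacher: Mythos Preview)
Your plan differs from the paper's argument and contains a real gap at its key step.

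The paper does not use Lemma~\ref{lemma_forbidden_curves} at all. Its proof is a three-line structural argument in the red-green tree: assuming no interior vertex $v_k$ of $s$ appears among the intermediate nodes of $\varsigma(s)$, it considers the paths $\varsigma([v_0,v_k])$ and $\varsigma([v_k,v_t])$ in $\mathcal{T}_\mathcal{F}$ and argues that together with $\varsigma(s)$ these force a cycle in $\mathcal{T}_\mathcal{F}$, contradicting that $\mathcal{T}_\mathcal{F}$ is a tree.

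Your route instead tries to place an \emph{intermediate} curve $\sigma_{i_j}$, with $1<j<\ell$, into configuration~$(iii)$ of Figure~\ref{no_sub_no_quotient} and then invoke Lemma~\ref{lemma_forbidden_curves}. This cannot succeed as written. Configuration~$(iii)$ in that figure concerns only the \emph{first} curve $\xi=\sigma_{i_1}$ (or dually $\xi'=\sigma_{i_\ell}$); the proof of Lemma~\ref{lemma_forbidden_curves} opens precisely with ``The curve $\xi$ must be equal to $\sigma_{i_1}$.'' Since your chosen curve has $1<j<\ell$, it can never realize configuration~$(iii)$ relative to $\varsigma(s)$. Configurations~$(i)$ and~$(ii)$ do not help either: those forbid curves having one endpoint at an interior vertex $x$ of $[v_0,v_t]$ and the other endpoint off $[v_0,v_t]$, whereas by your own contradiction hypothesis \emph{both} endpoints $x_{j-1},x_j$ of $\sigma_{i_j}$ lie off $s$. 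Thus Lemma~\ref{lemma_forbidden_curves}, applied to $\varsigma(s)$, says nothing about $\sigma_{i_j}$. To salvage the idea you would have to manufacture a \emph{different} acyclic path $s'$ for which $\sigma_{i_j}$ sits at an end of $\varsigma(s')$ in a forbidden position---this is exactly how Lemma~\ref{lemma_forbidden_curves} is deployed in the proof of Lemma~\ref{lemma_simples}---but your case analysis (``entirely off $s$'' versus ``passing through some $v_i$ as an interior vertex'') does not construct such an $s'$, and it is not evident how to do so uniformly.
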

\begin{proof}
Suppose no such vertex $v_i$ exists. Let $v_k$ be any vertex of $s$ where $k \in \{1,\ldots, t-1\}$. Since $\mathcal{T}_\mathcal{F}$ is a tree, there exist the two sequences $\varsigma([v_0,v_k])$ and $\varsigma([v_k,v_t])$ inside the red-green tree $\mathcal{T}_\mathcal{F}$, both of which consist of curves that do not appear in $\varsigma(s)$. However, the union of the curves appearing $\varsigma(s)$, $\varsigma([v_0,v_k]),$ and $\varsigma([v_k,v_t])$ produces a subgraph of $\mathcal{T}_{\mathcal{F}}$ that includes a cycle, a contradiction.
\end{proof}

\section{Proof of Theorem~\ref{thm:thm1}}\label{Sec_proof_of_thm}

In this section, we prove Theorem~\ref{thm:thm1}, which says that $\text{wide}(\Lambda_T) = \Lambda_T^{ss}$. In particular, it says that any wide subcategory $\mathcal{W} \in \text{wide}(\Lambda_T)$ is realizable as a semistable subcategory of $\text{mod}(\Lambda_T)$ via the stability condition $\theta_\mathcal{F}(-) = \sum_{\delta \in \mathcal{F}^{gr}}\langle \textbf{g}(\delta), -\rangle$ where $\mathcal{F}$ is the facet of $\Delta^{NC}(T)$ corresponding to $\mathcal{W}.$ We also mention the connection between our result and related work.


\begin{proof}[Proof of Theorem~\ref{thm:thm1}]
By the linearity of $\theta_{\mathcal{F}}$, it is enough to show that the indecomposable modules of $\theta^{ss}_\mathcal{F}$ are exactly the indecomposable modules of $\mathcal{W}$. Throughout the proof, we let $\textbf{B}$ denote the noncrossing tree partition corresponding to $\mathcal{F}$. 

One checks that if $\mathcal{W} = \text{mod}(\Lambda_T)$ or $\mathcal{W}$ is the zero subcategory, then the result holds. In the former case, we have $\theta_\mathcal{F}(-) = 0$. In the latter case, we have $\theta_\mathcal{F}(-) = \sum_{e \in \text{Int}(E_T)} \langle \textbf{x}_e , - \rangle$. Therefore, we can furthermore assume that $\mathcal{W} \neq \text{mod}(\Lambda_T)$ and that $\mathcal{W}$ is not the zero subcategory. This implies that $\mathcal{F}$ has at least one green arc and at least one red arc.

We first show that if $M(w(s)) \in \mathcal{W}$, then $M(w(s)) \in \theta^{ss}_\mathcal{F}.$ Assume $s \in \text{Seg}_r(\textbf{B})$.  Since $s = s_{\gamma,\mathcal{F}}$ for some red arc $\gamma \in \mathcal{F}^{red}$ and since $s$ is a red segment, it follows from Proposition~\ref{vanish} that $\theta_\mathcal{F}(M(w(s))) = 0$. 

Now, let $M(w(t))$ be a proper indecomposable submodule of $M(w(s))$. By Lemma~\ref{lem2}, we have $t \in C_{s}\backslash\{s\}.$ Any submodule of $M(w(s))$ is of the form $\bigoplus_i M(w(t_i))$ where $t_i \in C_{s}\backslash\{s\}$ for all $i$, and any distinct pair of these segments have no common subsegment. Thus it is enough to show that $\theta_\mathcal{F}(M(w(t))) < 0$.

It follows from Lemma~\ref{lem1} that we can write $\mathcal{F}^{gr} = \mathcal{F}^{gr}_1\sqcup \mathcal{F}^{gr}_2$ where
$$\begin{array}{rcl}
\mathcal{F}^{gr}_1 & = & \{\delta \in \mathcal{F}^{gr}: \ |Z^-_\delta\cap \{\text{edges of } t\}| > |Z^+_\delta\cap \{\text{edges of } t\}| \}\\
\mathcal{F}^{gr}_2 & = & \{\delta \in \mathcal{F}^{gr}: \ |Z^-_\delta\cap \{\text{edges of } t\}| = |Z^+_\delta\cap \{\text{edges of } t\}|  \}
\end{array}$$
and that the set $\mathcal{F}_1^{gr}$ is nonempty. We now have that
$$\begin{array}{rcl}
\theta_\mathcal{F}(M(w(t))) & = & \sum_{\delta \in \mathcal{F}^{gr}}\langle\textbf{g}(\delta), \textbf{dim}(M(w(t)))\rangle \\
& = & \sum_{\delta \in \mathcal{F}^{gr}} |Z^+_\delta\cap \{\text{edges of } t\}| -|Z^-_\delta\cap \{\text{edges of } t\}|\\
& = & \sum_{\delta \in \mathcal{F}_1^{gr}} |Z^+_\delta\cap \{\text{edges of } t\}| -|Z^-_\delta\cap \{\text{edges of } t\}|\\
& < & 0.
\end{array}$$
We obtain that $M(w(s)) \in \theta^{ss}_\mathcal{F}$ and that it is $\theta_\mathcal{F}$-stable.

Now assume that $s = s_1\circ \cdots \circ s_k$ where $s_i \in \text{Seg}_r(\textbf{B})$ for all $i \in \{1,\ldots, k\}$. Using Proposition~\ref{vanish}, we have that $\theta_{\mathcal{F}}(M(w(s_i))) = 0$ for all $i \in \{1,\ldots, k\}$.  By the linearity of $\theta_\mathcal{F}$, we see that $\theta_\mathcal{F}(M(w(s))) = \sum_{i =1}^k\theta_{\mathcal{F}}(M(w(s_i))) = 0.$ One checks that there exists $j \in \{1,\ldots, k\}$ such that $M(w(s_j))$ is a submodule of $M(w(s))$. Thus not all submodules $M$ of $M(w(s))$ satisfy $\theta_\mathcal{F}(M) < 0$. However, using an argument similar to that which appears in the previous two paragraphs, we have that $\theta_\mathcal{F}(M) \le 0$ for all submodules $M$ of $M(w(s))$. We obtain that $M(w(s)) \in \theta^{ss}_\mathcal{F}.$

Next, suppose $M(w(s)) \not \in \mathcal{W}.$ Observe that either $s \in \text{Seg}(T)\backslash\left(\overline{\text{Seg}_g(\text{Kr}(\textbf{B}))}\cup\overline{ \text{Seg}_r(\textbf{B})}\right)$ or $s \in \overline{\text{Seg}_g(\text{Kr}(\textbf{B}))}$. First, assume that $s \in \overline{\text{Seg}_g(\text{Kr}(\textbf{B}))}$ and $s = s_1\circ \cdots \circ s_k$ for some $s_1,\ldots, s_k \in \text{Seg}_g(\text{Kr}(\textbf{B})).$ Then Proposition~\ref{vanish} implies that $\theta_\mathcal{F}(M(w(s))) = \sum_{i = 1}^k \theta_\mathcal{F}(M(w(s_i)))  = k >0$ so $M(w(s)) \not \in \theta^{ss}_\mathcal{F}.$ 

Now assume that $s \in \text{Seg}(T)\backslash\left(\overline{\text{Seg}_g(\text{Kr}(\textbf{B}))}\cup\overline{ \text{Seg}_r(\textbf{B})}\right)$. Let $0 \to T \to M(w(s)) \to F \to 0$ denote a short exact sequence with $T \in \mathscr{T}_\textbf{B}$ and $F \in \mathscr{F}_\textbf{B}$ whose existence is guaranteed by Lemma~\ref{lemma_ii}. By assumption, we know that $T$ is a nonzero module. Let $M(w(t))$ with $t = t_1\circ \cdots \circ t_k$ for some $t_1, \ldots, t_k \in \bigvee_{s \in \text{Seg}_g(\text{Kr}(\textbf{B}))} K_s$ be a summand of $T$. We can further assume that for each $i \in \{1,\ldots, k\}$ one has $t_i \in K_{s_i}$ for some $s_i \in \text{Seg}_g(\text{Kr}(\textbf{B})).$

As mentioned earlier in the proof, one checks that there exists $j \in \{1,\ldots, k\}$ such that $M(w(t_j))$ is a submodule of $M(w(t)).$ Now let $s_j = s_{\delta,\mathcal{F}}$ where $\delta$ is a green arc of $\mathcal{F}$. Since $t_j \in K_{ s_{\delta,\mathcal{F}}}$, we have that $\langle \textbf{g}(\delta),\textbf{dim}(M(w(t_j)))\rangle > 0$. A similar argument to that which appears in the proof of Lemma~\ref{lem1} shows that for any arc $\delta \in \mathcal{F}$ and any segment $t' \in K_{s_{\gamma, \mathcal{F}}}$ one has that $|Z^-_\delta\cap \{\text{edges of $t$}\}| \le |Z^+_\delta\cap \{\text{edges of $t$}\}|.$ It follows that $\theta_{\mathcal{F}}(M(w(t_j))) > 0$. As $M(w(t_j))$ is a submodule of $M(w(s))$, we conclude that $M(w(s)) \not \in \theta_\mathcal{F}^{ss}$.

The final assertion that every semistable subcategory is a wide subcategory was proved in \cite{king1994moduli}.\end{proof}

\begin{remark}\label{ingallsthomas}
If $T$ is a tree all of whose interior vertices have degree 3 and has no subconfiguration of the form shown in Figure~\ref{bad_subtree}, then $Q_T$ is a type $A$ Dynkin quiver. Theorem~\ref{thm:thm1} therefore recovers Ingalls' and Thomas' bijection in \cite[Theorem 1.1]{ingalls2009noncrossing} between wide subcategories and semistable subcategories in type $A$. 
\end{remark}

\begin{figure}
$$\includegraphics[scale=1]{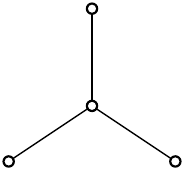}$$
\caption{The forbidden subconfigurations from Remark~\ref{ingallsthomas}. Here all four vertices are interior vertices.}
\label{bad_subtree}
\end{figure}

\begin{remark}
{The assertion in Theorem~\ref{thm:thm1} that all wide subcategories of $\text{mod}(\Lambda_T)$ are semistable can be deduced from \cite[Theorem 1.2]{yurikusa}. Our work differs from that of Yurikusa in that we have found a combinatorial construction of a  stability condition realizing a wide subcategory as a semistable subcategory.}
\end{remark}

From Theorem~\ref{thm:thm1} and \cite[Theorem 7.1]{garver_oriented_rep_thy}, it follows that the poset of semistable subcategories of mod$(\Lambda_T)$ is isomorphic to the lattice of noncrossing tree partitions of $T$ as the following corollary shows. In particular, we obtain a combinatorial classification of the semistable subcategories of mod$(\Lambda_T)$.

\begin{corollary}
For any tree $T$, the map $\rho: \text{NCP}(T)\to \Lambda^{ss}_T$ is an isomorphism of posets.
\end{corollary}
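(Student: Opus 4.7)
The plan is to deduce the corollary by combining Theorem~\ref{thm:thm1} with the previously established poset isomorphism between $\text{NCP}(T)$ and $\text{wide}(\Lambda_T)$ from \cite[Theorem 7.1]{garver_oriented_rep_thy}. First, I would recall that Theorem~\ref{thm:thm1} has two components: for each facet $\mathcal{F} \in \Delta^{NC}(T)$, the Kreweras stability condition $\theta_\mathcal{F}$ realizes the corresponding wide subcategory $\mathcal{W}$ as the semistable subcategory $\theta_\mathcal{F}^{ss}$, and conversely every semistable subcategory arises as a wide subcategory. Together these inclusions give the equality of sets $\text{wide}(\Lambda_T) = \Lambda_T^{ss}$ inside the collection of full subcategories of $\text{mod}(\Lambda_T)$.

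Next, I would observe that both $\text{wide}(\Lambda_T)$ and $\Lambda_T^{ss}$ are endowed with the partial order given by inclusion of subcategories. Since the two posets have identical underlying sets and identical order relations, the identity map on their common underlying set is a poset isomorphism $\text{wide}(\Lambda_T) \to \Lambda_T^{ss}$. No further compatibility check is needed because inclusion is inherited from the ambient lattice of subcategories.

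Finally, I would invoke \cite[Theorem 7.1]{garver_oriented_rep_thy}, which states precisely that the map $\rho : \text{NCP}(T) \to \text{wide}(\Lambda_T)$ sending a noncrossing tree partition $\textbf{B}$ to the additive closure of $\{M(w(s)) : s \in \overline{\text{Seg}_r(\textbf{B})}\}$ is an isomorphism of posets. Composing $\rho$ with the identification $\text{wide}(\Lambda_T) = \Lambda_T^{ss}$ yields the desired poset isomorphism $\rho : \text{NCP}(T) \to \Lambda_T^{ss}$.

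There is no real obstacle here; the corollary is essentially a bookkeeping consequence of Theorem~\ref{thm:thm1}. The only subtlety worth flagging explicitly in the writeup is that the poset structure on $\Lambda_T^{ss}$ is, by definition, inclusion of semistable subcategories (not, say, a quotient of a partial order on stability conditions), so that the set-level equality $\text{wide}(\Lambda_T) = \Lambda_T^{ss}$ automatically promotes to an equality of posets.
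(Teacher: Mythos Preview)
Your proposal is correct and matches the paper's own reasoning: the corollary is stated without proof, immediately after the sentence ``From Theorem~\ref{thm:thm1} and \cite[Theorem 7.1]{garver_oriented_rep_thy}, it follows that the poset of semistable subcategories of $\text{mod}(\Lambda_T)$ is isomorphic to the lattice of noncrossing tree partitions of $T$.'' Your explicit unpacking---that Theorem~\ref{thm:thm1} gives the set-level equality $\text{wide}(\Lambda_T)=\Lambda_T^{ss}$, that both sides carry the inclusion order so this is automatically a poset equality, and that composing with $\rho$ yields the result---is exactly the intended argument.
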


We remark that the unique minimal noncrossing tree partition $\{\{v\}: v \in V^{\circ}_T\}$ is sent to the zero subcategory of $\text{mod}(\Lambda_T)$ under this isomorphism. Similarly, the unique maximal noncrossing tree partition $\{V_T^{\circ}\}$ is sent to the category $\text{mod}(\Lambda_T)$ under this isomorphism.

\section{Additional questions}\label{Sec_additiona_stuff}

A crucial step in proving Theorem~\ref{thm:thm1} was the use of the combinatorics of the red-green tree $\mathcal{T}_\mathcal{F}$ to evaluate $\theta_\mathcal{F}$ on any indecomposable $\Lambda_T$-module. A second crucial step in the proof was the fact that for any facet $\mathcal{F}$ of the noncrossing complex, the $\textbf{g}$-vectors in $G(\mathcal{F})$ and the $\textbf{c}$-vectors in $C(\mathcal{F})$ are dual bases with respect to $\langle -, - \rangle$. This fact has already been established for general \textbf{gentle algebras} of which tiling algebras are examples (see \cite[Proposition 4.16]{palu2017non}). There the role of the noncrossing complex is played by the \textbf{blossoming complex}. However, we are not aware of a notion of the red-green tree associated to a facet of this complex. We propose the following problem.

\begin{problem}
Find a combinatorial description of the Kreweras complement $\text{Kr}:\text{wide}(\Lambda) \to \text{wide}(\Lambda)$ where $\Lambda$ is a gentle algebra. Then, given a wide subcategory $\mathcal{W}$ whose corresponding facet of the blossoming complex is $\mathcal{F}$, use this description to determine when the Kreweras stability condition $\theta_{\mathcal{F}}$ satisfies $\theta^{ss}_{\mathcal{F}} = \mathcal{W}$ for all facets $\mathcal{F}$ of the blossoming complex.
\end{problem}

\section*{Acknowledgements}{This project began at a Mitacs Globalink Research Internship at Universit\'e du Qu\'ebec \`a Montr\'eal. M. Garcia was supported by Mitacs Globalink and the project CONACyT-238754. A. Garver was supported by NSERC grant RGPIN/05999-2014 and the Canada Research Chairs Program. The authors thank an anonymous referee for careful comments that helped to improve the manuscript.}

\bibliographystyle{plain}
\bibliography{sample.bib}

\end{document}